\newtheorem{theorem}{Theorem}[section]
\theoremstyle{plain}
\newtheorem{corollary}[theorem]{Corollary}
\newtheorem{definition}[theorem]{Definition}
\newtheorem{lemma}[theorem]{Lemma}
\newtheorem{notation}[theorem]{Notation}
\newtheorem{problem}[theorem]{Problem}
\newtheorem{proposition}[theorem]{Proposition}
\numberwithin{equation}{section}
\begin{document}

\title[Colored Graphs without Colorful Cycles]{Colored Graphs \\without Colorful Cycles}

\author{Richard N. Ball}
\address[Ball]{Dept. of Mathematics,
University of Denver, 2360 S Gaylord St., Denver, Colorado 80208, U.S.A.}
\email{rick.ball@nsm.du.edu} \urladdr{www.math.du.edu/\symbol{126}rball}

\author{Ale\v{s} Pultr}
\address[Pultr]{Dept. of Applied Mathematics and ITI, MFF
Charles University, Malostransk\'{e} n\'{a}m. 25, Praha 1, 11800, Czech
Republic} \email{pultr@kam.ms.mff.cuni.cz}

\author{Petr Vojt\v{e}chovsk\'{y}}
\address[Vojt\v{e}chovsk\'y]{Dept. of Mathematics,
University of Denver, 2360 S Gaylord St., Denver, Colorado 80208, U.S.A.}
\email{petr@math.du.edu} \urladdr{www.math.du.edu/\symbol{126}petr}

\thanks{The first author would like to express his thanks for support by project
1M0021620808 of the Ministry of Education of the Czech Republic.}

\thanks{The second author would like to express his thanks for support by project
1M0021620808 of the Ministry of Education of the Czech Republic, by the NSERC
of Canada and by the Gudder Trust of the University of Denver.}

% MATH SUBJECT CLASSIFICATION AND KEYWORDS

\subjclass[2000]{Primary 05C15. Secondary 05C55.}

\keywords{Complete graph, cycle, coloring, tree, colorful cycle, Gallai
coloring}

% ABSTRACT

\begin{abstract}
A colored graph is a complete graph in which a color has been assigned to each
edge, and a colorful cycle is a cycle in which each edge has a different color.
We first show that a colored graph lacks colorful cycles iff it is Gallai,
i.e., lacks colorful triangles. We then show that, under the operation $m\circ
n\equiv m+n-2$, the omitted lengths of colorful cycles in a colored graph form
a monoid isomorphic to a submonoid of the natural numbers which contains all
integers past some point. We prove that several but not all such monoids are
realized.

We then characterize exact Gallai graphs, i.e., graphs in which every triangle
has edges of exactly two colors. We show that these are precisely the graphs
which can be iteratively built up from three simple colored graphs, having $2$,
$4$, and $5$ vertices, respectively. We then characterize in two different ways
the monochromes, i.e., the connected components of maximal monochromatic
subgraphs, of exact Gallai graphs. The first characterization is in terms of
their reduced form, a notion which hinges on the important idea of a full
homomorphism. The second characterization is by means of a homomorphism
duality.
\end{abstract}

\maketitle

\section{Introduction}

For the purposes of constructing coproducts of distributive lattices, the
first two authors found certain edge-colorings of complete graphs to be
useful. The specific colorings of use were those lacking colorful cycles of
particular lengths. It turns out that such colorings exhibit a structure which
may be of interest in its own right. We investigate that structure here.

The absence of short colorful cycles implies the absence of certain longer
ones, and this fact leads to the concept of the spectrum, defined and analyzed
in Section \ref{Sec:Spec}. Gallai colorings, i.e., colorings which lack
colorful 3-cycles, constitute an extreme example of this phenomenon, for they
have no colorful cycles at all (Proposition \ref{Pr:3=All}).

We therefore turn our attention to Gallai colorings. These colorings are known
to have a simple and pleasing structure, which we review and elaborate in
Section \ref{Sec:GallCliq}. We then impose the additional hypothesis of
exactness, i.e., the hypothesis that every 3-cycle has edges of exactly two
colors. The resulting structural description, given in Section
\ref{Sec:ExGallCliq}, is especially sharp, and, in fact, the analysis can be
considered to be complete. The structural results make it possible to
characterize, in Section \ref{Sec:FullHom}, the monochromes, i.e., the
components of the monochromatic subgraphs. This section introduces the
important notion of a full homomorphism, which is then used in Section
\ref{Sec:HomDual} to elaborate the characterization of exact Gallai monochromes
by means of a homomorphism duality.

Gallai initiated the investigation of the colored graphs which now bear his
name in his foundational paper \cite{Gallai:1967}. Since then, these graphs
have appeared in several different contexts and for different reasons. We
mention only two of the more recent investigations: Gy\'{a}rf\'{a}s and Simonyi
showed the existence of monochromatic spanning brushes in
\cite{GyarfasSimonyi:2004}; Chung and Graham found the bound on the maximum
number of vertices for a given number of colors in exact Gallai cliques in
\cite{ChungGraham:1983} (see Theorem \ref{Thm:VerBnd}). A good general
background reference is the survey article \cite{Nesetril:1995}.

\section{Ground clearing}

Graphs will be assumed to be finite, symmetric (undirected), and without loops.
We denote a graph $G$ by $\left(  V_{G},E_{G}\right)  $, where $V_{G}$ and
$E_{G}$ designate the sets of vertices and edges of $G$, respectively. Symbols
$u$, $v$, and $w$ are reserved for vertices, with the edge connecting vertices
$u$ and $v$ designated by $uv$. The symbol $K\ $is reserved for complete
graphs.

An \emph{edge coloring of a graph }$G$, or simply a \emph{coloring of }$G$, is
an assignment of an element of a finite set $\Gamma$ of \emph{colors} to each
edge of $G$. We use lower-case Greek letters to designate the individual
colors, upper-case Greek letters to designate sets of colors, $\overline{uv}$
to designate the color assigned to the edge $uv$, and $\overline{\bullet}$ to
designate the coloring map itself. A \emph{colored graph} is an object of the
form $G=\left(  V,E,\overline{\bullet}\right)  $, where $\left(  V,E\right)  $
is a graph and $\overline{\bullet}:E\rightarrow \Gamma$ is a coloring.

In any graph, a \emph{clique} is a complete subgraph induced by a nonempty
subset of vertices. We denote cliques by symbols $a$, $b$, $c$, $d$, and for
cliques $a$ and $b$, we denote the set of edges joining their vertices by
\[
ab\equiv\left\{  uv\in E:u\in a,\ v\in b\right\}  .
\]
In most instances, our graphs will be complete, so that the cliques could be
identified with the corresponding vertex subsets. Still, we prefer to speak of
cliques to emphasize that we deal with edges rather than with vertices.

In a colored graph $G$, a clique is regarded as a colored graph under the
restriction of the coloring of $G$. For cliques $a$ and $b$, we designate by
$\overline{ab}$ the set of colors of the edges of $ab$; note that
$\overline{aa}=\emptyset$ if $\left\vert a\right\vert =1$.

In any graph, an $n$\emph{-cycle}, $n\geq2$, is a sequence $\left(
v_{1},v_{2},\ldots,v_{n}\right)  $ of distinct vertices. Two cycles are
regarded as identical if they can be made to coincide by a cyclical
($v_{i}\longmapsto v_{j+i}$) permutation of their elements, where all
subscript arithmetic is performed $\operatorname{mod}n$. $3$-cycles are called
\emph{triangles}, $4$-cycles are called \emph{squares}, and so forth. The
\emph{edges of a cycle }$\left(  v_{1},v_{2},\ldots,v_{n}\right)  $ are those
of the form $v_{i}v_{i+1}$.

In a colored graph, a cycle $\left(  v_{1},v_{2},\ldots,v_{n}\right)  $ is
\emph{colorful} if all its edges have different colors, i.e., if
\begin{displaymath}
    \overline{v_{i}v_{i+1}}=\overline{v_{j}v_{j+1}}\Longleftrightarrow
    i=j\operatorname{mod}n.
\end{displaymath}
Note that a $2$-cycle is never colorful. A \emph{Gallai clique} is a clique
which is complete and has no colorful triangles. An \emph{exact Gallai clique}
is a Gallai clique in which every triangle has edges of exactly two colors.

\section{The spectrum of a colored graph}\label{Sec:Spec}

In complete colored graphs, the absence of colorful cycles of a particular
length implies the absence of certain longer colorful cycles. In particular,
the absence of colorful triangles implies the absence of colorful cycles of any
length. In this section we prove this fact (Proposition \ref{Pr:3=All}) and
more. \emph{The running assumption throughout this section is that we are
dealing with a complete colored graph }$K\emph{.}$\emph{ }

The \emph{spectrum} of a coloring $\overline{\bullet}$ is the set of
prohibited lengths of colorful cycles, designated
\begin{displaymath}
    S\left(  \overline{\bullet}\right)  \equiv\left\{  n\geq2:\text{there are no
    colorful }n\text{-cycles}\right\}  .
\end{displaymath}
Obviously, every spectrum contains $2$, and contains all integers
$n>\left\vert K\right\vert $. The set of all spectra will be denoted by
$\mathcal{S}$.

On the set $\{2,3,\dots\}$ define an operation $\circ$ by setting
\begin{displaymath}
    m\circ n=m+n-2.
\end{displaymath}
The monoid so obtained is isomorphic to the additive monoid
$\mathbb{N}=\{0,1,\dots\}$ of natural numbers via $n\mapsto n-2$; we denote it
$\mathbb{N}(2)$.

\begin{proposition}
Every spectrum $S\in\mathcal{S}$ is a submonoid of $\mathbb{N}(2)$ which is
\emph{eventually solid}, i.e., contains all integers $k\geq n$ for some $n$.
\end{proposition}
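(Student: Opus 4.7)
The plan is to verify three things about $S \equiv S(\overline{\bullet})$: it contains the monoid identity of $\mathbb{N}(2)$, it is closed under $\circ$, and it is eventually solid. The identity of $\mathbb{N}(2)$ is $2$, and $2 \in S$ because a $2$-cycle is never colorful by definition. Eventual solidity is also essentially free: if $n > |K|$, then $K$ has no $n$-cycle at all, so certainly no colorful one, and therefore $n \in S$.

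The substantive step is closure under $\circ$. Given $m,n \in S$, I would prove $m \circ n = m+n-2 \in S$ by contradiction. Assume for contradiction that $C = (v_1, v_2, \ldots, v_{m+n-2})$ is a colorful cycle of length $m+n-2$. Since $K$ is complete, the chord $e = v_1 v_m$ is available, and it splits $C$ into two smaller cycles: the $m$-cycle $A = (v_1, v_2, \ldots, v_m)$, whose edges are the first $m-1$ edges $v_1v_2,\ldots,v_{m-1}v_m$ of $C$ together with $e$, and the $n$-cycle $B = (v_1, v_m, v_{m+1}, \ldots, v_{m+n-2})$, whose edges are the remaining $n-1$ edges $v_mv_{m+1},\ldots,v_{m+n-2}v_1$ of $C$ together with $e$. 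Let $\gamma$ denote the color of $e$.

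The contradiction then comes from a short case analysis on where $\gamma$ occurs in $C$. Because $C$ is colorful, its $m+n-2$ edges have pairwise distinct colors, so $\gamma$ can coincide with the color of at most one edge of $C$, and hence can lie in at most one of the two halves of $C$ split by the chord. If $\gamma$ does not appear among the first $m-1$ edges of $C$, then $A$ has $m$ edges of pairwise distinct colors, making $A$ a colorful $m$-cycle and contradicting $m \in S$. Otherwise $\gamma$ does not appear among the remaining $n-1$ edges of $C$, and then $B$ is a colorful $n$-cycle, contradicting $n \in S$. These two alternatives exhaust all possibilities.

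The only real obstacle is bookkeeping: one must keep the cycle indices and the counts of edges in the two halves straight when splitting $C$ along the chord. No deeper combinatorial ingredient is required; the argument uses only completeness of $K$ and the distinctness of the colors of $C$.
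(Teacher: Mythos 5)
Your proof is correct and follows essentially the same route as the paper's: split the $(m+n-2)$-cycle along a chord into an $m$-cycle and an $n$-cycle and observe that the chord's color can be repeated on at most one side, so one of the two smaller cycles would be colorful. The paper phrases this directly (the chord's color must match an edge on each side, forcing a repeat in $C$) while you argue by contradiction, but the content is identical, and your explicit treatment of the identity $2$ and of eventual solidity matches the remarks the paper makes just before the proposition.
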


\begin{proof}
Suppose that $m$, $n\in S(\overline{\bullet})$. Let $C$ be an $(m+n-2)$-cycle.
There is a chord of $C$ that makes $C$ into an $m$-cycle conjoined with an
$n$-cycle along the chord. Since $m\in S\left( \overline{\bullet}\right)  $,
the color of the chord must match the color of some other edge from the
$m$-cycle, and likewise that of some other edge of the $n$-cycle. This means
that $C$ is not colorful.
\end{proof}

Since $3$ is the unique generator of $\mathbb{N}(2)$ corresponding to $1$ in
$\mathbb{N}$, we obtain the following insight.

\begin{proposition}
\label{Pr:3=All}If $3\in S\in\mathcal{S}$ then $S$ is the whole of
$\mathbb{N}(2)$. In other words, if a colored graph contains no colorful
triangles then it contains no colorful cycles at all.
\end{proposition}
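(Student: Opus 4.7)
The plan is to observe that the hypothesis ``$3 \in S$'' is algebraically maximal: since $S$ is already known (by the previous proposition) to be a submonoid of $\mathbb{N}(2)$, the argument reduces to identifying the submonoid of $\mathbb{N}(2)$ generated by $3$ and checking it is everything.

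First I would invoke the stated isomorphism $\mathbb{N}(2) \cong \mathbb{N}$ given by $n \mapsto n-2$. Under this map $3 \mapsto 1$, and of course $1$ generates $\mathbb{N}$ as an additive monoid. Transporting back through the isomorphism, $3$ generates $\mathbb{N}(2)$ under $\circ$. Hence any submonoid of $\mathbb{N}(2)$ that contains $3$ must coincide with all of $\mathbb{N}(2)$, yielding the claim.

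If one prefers an elementary, isomorphism-free presentation, the same conclusion follows by a direct induction: the identity element $2$ lies in every submonoid, $3 \in S$ by hypothesis, and if $n \in S$ with $n \geq 3$, then $n+1 = n + 3 - 2 = n \circ 3 \in S$ by closure under $\circ$. Thus every integer $k \geq 2$ lies in $S$.

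I do not anticipate any real obstacle here; the statement is essentially a corollary of the previous proposition together with the observation that $1$ generates $\mathbb{N}$. The only thing worth emphasizing in the writeup is the translation ``colorful cycle absence'' $\leftrightarrow$ ``membership in $S$'', so that the combinatorial second sentence of the proposition is seen to be merely a rephrasing of the algebraic first sentence.
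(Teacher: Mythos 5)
Your proposal is correct and matches the paper's reasoning exactly: the paper derives this proposition in one line from the preceding result that every spectrum is a submonoid of $\mathbb{N}(2)$, together with the observation that $3$ corresponds to the generator $1$ of $\mathbb{N}$ under $n\mapsto n-2$. Your elementary restatement via $n\circ 3=n+1$ is a fine unpacking of the same argument.
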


\begin{proposition}
\label{Pr:4S} Assume that $S\in\mathcal{S}$ satisfies $4\in S$. Then there is
an odd integer $m\geq3$ such that
\begin{displaymath}
    S=\{2,4,6,8,\dots,m-1,m,m+1,m+2,\dots\}.
\end{displaymath}
\end{proposition}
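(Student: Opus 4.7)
The plan is to use the two properties already established for spectra, namely (i) closure under $\circ$ as a submonoid of $\mathbb{N}(2)$ and (ii) eventual solidity, to force the stated shape.

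First I would exploit closure. Since $4 \in S$, induction on $k$ using $4 \circ n = n+2$ shows that every even integer $\ge 2$ lies in $S$. Concretely, $4 \circ 4 = 6$, $4 \circ 6 = 8$, and so on, so $\{2,4,6,8,\dots\} \subseteq S$.

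Next I would isolate the odd part of $S$. Eventual solidity guarantees that $S$ contains all sufficiently large integers, in particular infinitely many odd ones, so the set of odd elements of $S$ is nonempty; let $m$ be its minimum. Then $m \ge 3$ automatically. Applying $\circ 4$ repeatedly to $m$ gives $m, m+2, m+4, \dots \in S$, so every odd integer $\ge m$ is in $S$. Combining with the first step,
\begin{displaymath}
S \;\supseteq\; \{2,4,6,\dots\} \cup \{m,m+2,m+4,\dots\} \;=\; \{2,4,\dots,m-1\} \cup \{m,m+1,m+2,\dots\}.
\end{displaymath}
Finally, by the minimality of $m$ no odd integer smaller than $m$ can be in $S$, which yields the reverse inclusion.

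There is no serious obstacle here; the only subtlety is noticing that eventual solidity is precisely what is needed to guarantee the existence of some odd element of $S$, without which the argument would collapse. Once $m$ is defined as the minimum odd element, the proof is a two-line induction in both directions.
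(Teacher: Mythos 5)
Your proof is correct and follows essentially the same route as the paper: generate all evens from $4$, let $m$ be the least odd element of $S$, and propagate upward by repeatedly applying $4\circ{-}$. The one thing you add beyond the paper's (terser) argument is the explicit justification, via eventual solidity, that an odd element of $S$ exists at all --- a point the paper leaves implicit.
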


\begin{proof}
The submonoid of $\mathbb{N}(2)$ generated by $4$ consists of all positive
even integers. Let $m$ be the smallest positive odd integer in $S$. Then
$m+2=4+m-2=4\circ m\in S$, $m+4=4\circ(m+2)\in S$, and so forth.
\end{proof}

\begin{corollary}
\label{Cr:4S} If a colored graph has no colorful squares and no colorful
pentagons then it has no colorful $n$-cycles for any $n>3$.
\end{corollary}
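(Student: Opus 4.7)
The plan is to deduce the corollary directly from Proposition \ref{Pr:4S} (which handles the hypothesis $4\in S$) together with the extra information that $5\in S$. First I would translate the hypothesis into spectrum language: the assumption that there are no colorful squares and no colorful pentagons means precisely that $4,5\in S(\overline{\bullet})$, and the desired conclusion is that $S(\overline{\bullet})$ contains every integer $n\geq 4$.

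Next, I would apply Proposition \ref{Pr:4S}, which, using $4\in S$, yields an odd integer $m\geq 3$ such that
\[
S(\overline{\bullet})=\{2,4,6,\ldots,m-1,\,m,\,m+1,\,m+2,\ldots\}.
\]
Since $5\in S(\overline{\bullet})$ and $5$ is odd, the smallest odd element of $S(\overline{\bullet})$ is at most $5$, forcing $m\in\{3,5\}$. In the case $m=5$, the displayed description already gives $S(\overline{\bullet})\supseteq\{4,5,6,7,\ldots\}$, which is the conclusion. In the case $m=3$, Proposition \ref{Pr:3=All} applies and gives $S(\overline{\bullet})=\mathbb{N}(2)$, which is even stronger.

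There is no real obstacle here: the work has already been done in Propositions \ref{Pr:3=All} and \ref{Pr:4S}, and the corollary is essentially a case analysis on the parity of the smallest odd element of the spectrum. The only thing to be careful about is that Proposition \ref{Pr:4S} is stated under the hypothesis $4\in S$ but does not itself require $5\in S$; one must genuinely invoke $5\in S$ to rule out values of $m$ larger than $5$ (which would leave odd gaps in the spectrum above $3$).
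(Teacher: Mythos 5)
Your proof is correct and follows exactly the route the paper intends: the corollary is an immediate consequence of Proposition \ref{Pr:4S}, using $5\in S$ to force the smallest odd element $m$ of the spectrum to be $3$ or $5$, either of which puts every $n\geq 4$ in $S$. The appeal to Proposition \ref{Pr:3=All} in the case $m=3$ is harmless but redundant, since the displayed form of $S$ already gives everything in that case.
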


The simplest question suggested by Proposition \ref{Pr:4S} is whether the
integer $m$ mentioned there can be any odd number, i.e., whether colored
graphs without colorful squares can admit colorful $m$-gons for arbitrary odd
integers $m$. The answer to this question is positive.

\begin{proposition}
\label{Pr:4S2}For every odd integer $m>1$ there is a colored graph with $m$
vertices having a colorful $m$-gon but no colorful squares.
\end{proposition}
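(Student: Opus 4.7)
My plan is to prove the proposition by constructing, for each odd $m > 1$, an explicit edge-coloring of the complete graph $K_m$ that exhibits a colorful $m$-gon but no colorful $4$-cycle. Label the vertices $v_0, v_1, \ldots, v_{m-1}$ and color the Hamilton-cycle edges $v_iv_{i+1}$ (subscripts mod $m$) with $m$ pairwise distinct colors $c_0, \ldots, c_{m-1}$, which automatically yields the colorful $m$-gon on $(v_0, v_1, \ldots, v_{m-1})$. The remaining task is to color the chords, reusing the colors $c_0, \ldots, c_{m-1}$, so that every $4$-cycle of $K_m$ contains at least one repeated color.

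A basic set of necessary constraints comes from the $4$-cycles $(v_i, v_{i+1}, v_{i+2}, v_{i+3})$ built from three consecutive Hamilton edges plus a chord: these force $\overline{v_iv_{i+3}} \in \{c_i, c_{i+1}, c_{i+2}\}$. Similar constraints arise from $4$-cycles mixing two consecutive cycle edges with two chords, and from $4$-cycles made mostly of chords. I would classify all $4$-cycles by the cyclic positions of their four vertices on the Hamilton cycle (equivalently, by the multi-set of arc-lengths they cut off) and verify non-colorfulness in each combinatorial type. For $m = 3$ the claim is trivial since $K_3$ has no $4$-cycles. For $m = 5$ the coloring with chord assignments $\overline{v_0v_2} = c_3$, $\overline{v_0v_3} = c_0$, $\overline{v_1v_3} = c_3$, $\overline{v_1v_4} = c_3$, $\overline{v_2v_4} = c_0$ works; its fifteen $4$-cycles can be verified one at a time. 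For general odd $m \geq 7$, I would provide either a uniform arithmetic rule in $\mathbb{Z}_m$ or a recursive construction passing from $K_{m-2}$ to $K_m$ by adding two vertices and coloring the $2m-1$ new edges carefully.

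The main obstacle is designing the chord-coloring rule. The most natural symmetric choices---coloring a chord by a cycle color read off from one of its endpoints, from its arc midpoint, or from the ``opposite'' cycle edge---typically create colorful $4$-cycles of shape $(v_i, v_{i+1}, v_{i+2}, v_{i+4})$. Even the successful $m=5$ coloring above is strikingly asymmetric, using only two of the five cycle colors on all chords. This suggests that the general construction will need a carefully chosen asymmetry, and I expect the hardest step to be finding a construction whose verification remains tractable uniformly in $m$.
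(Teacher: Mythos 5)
Your proposal is correct for $m=3$ and $m=5$ (I checked your fifteen $4$-cycles in the $m=5$ coloring, and each one does repeat a color), but for every odd $m\geq 7$ it stops exactly where the proof has to begin: no chord-coloring rule is actually exhibited, only the statement that a uniform arithmetic rule or a recursion from $K_{m-2}$ to $K_{m}$ ``would be provided.'' Since the entire content of the proposition is the existence of such a coloring for \emph{all} odd $m$, this is a genuine gap rather than a deferred routine verification, and you identify it as such yourself when you call the design of the chord rule ``the main obstacle.'' The recursive route in particular is not obviously viable: one must argue that the $2m-1$ new edges can always be colored without creating a colorful square through the two new vertices, and nothing in your outline indicates how.

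For comparison, the paper resolves the difficulty with a single uniform rule whose verification is a short parity argument, and it is worth seeing because the asymmetry it uses is of a different kind from the one in your $m=5$ example. Write $m=2k+1$, label the vertices $v_{i}$ for $-k\leq i\leq k$, and set $\overline{v_{i}v_{j}}=\alpha$ when $i$ and $j$ have the same parity, and $\overline{v_{i}v_{j}}=\beta_{i}$ when they have different parity and $\left\vert i\right\vert >\left\vert j\right\vert$. The cycle $\left(v_{-k},v_{-k+1},\ldots,v_{k}\right)$ is colorful, its edges receiving $\beta_{-k},\ldots,\beta_{-1},\beta_{1},\ldots,\beta_{k},\alpha$ in order. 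No square is colorful: the number of parity changes around a square is even, so either at least two pairs of consecutive vertices agree in parity (giving two edges colored $\alpha$), or the parities alternate all the way around, in which case the vertex $v_{i}$ of maximal absolute value has both of its neighbors in the square at strictly smaller absolute value (equal absolute value would force equal parity), so both incident square edges are colored $\beta_{i}$. The asymmetry here lives in the absolute value of the index rather than in a distinguished pair of colors, and that is precisely what makes the verification uniform in $m$; to complete your argument you would need an explicit rule of comparable strength.
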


\begin{proof}
Let $m=2k+1$, and label the vertices $v_{i}$, $-k\leq i\leq k$. We employ a
palette consisting of distinct colors $\alpha$ and $\beta_{i}$, $-k\leq i\leq
k$, $i\neq0$. For distinct indices $i$ and $j$, set
\begin{displaymath}
    \overline{v_{i}v_{j}}\equiv\left\{
    \begin{array}
    [c]{lll}
    \alpha & \text{if} & i\text{ and }j\text{ have the same parity,}\\
    \beta_{i} & \text{if} & i\text{ and }j\text{ have different parity and
    }\left\vert i\right\vert >\left\vert j\right\vert .
    \end{array}
    \right.
\end{displaymath}
The cycle $\left(  v_{-k},v_{-k+1},\ldots,v_{k}\right)  $ is colorful, with
the color of the edges in order being
\begin{displaymath}
    \beta_{-k},\beta_{-k+1},\ldots,\beta_{-1},\beta_{1},\ldots,\beta_{k-1},
    \beta_{k},\alpha.
\end{displaymath}

It remains to show that there are no colorful squares. Let $\left(
v_{i},v_{j},v_{k},v_{l}\right)  $ be a square. Assume that the following
happens at least twice around the square:
\begin{equation}
\text{Two consecutive vertices have the same parity.} \tag{$\ast$}
\end{equation}
Then at least two of the four edges are colored by $\alpha$, and the square is
not colorful. We can therefore assume that ($\ast$) happens at most once. But
($\ast$) cannot happen precisely once since the square has $4$ vertices, and so
($\ast$) never happens. Without loss of generality, let $i$ have the maximum
absolute value among the four indices. Since ($\ast$) never happens, we
conclude that $|j|<|i|$ and $|k|<|i|$. But then
$\overline{v_{i}v_{j}}=\overline{v_{i}v_{k}}=\beta_{i}$.
\end{proof}

% FIGURE (COLORFUL PENTAGON WITHOUT COLORFUL SQUARES)
\setlength{\unitlength}{1.2mm}
\begin{figure}[th]
\begin{center}
\begin{small}
\input{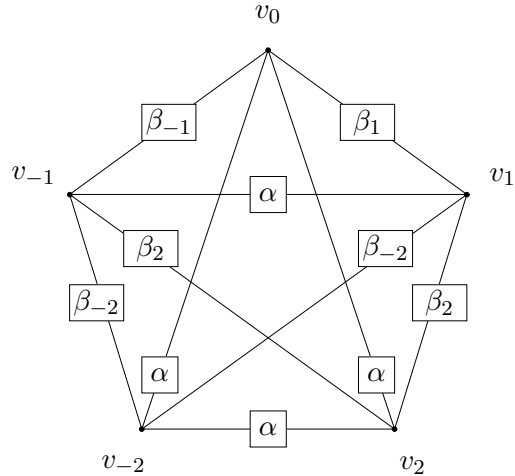}
\end{small}
\end{center}
\caption{A colorful pentagon without colorful squares.} \label{Fg:Pentagon}
\end{figure}

Figure \ref{Fg:Pentagon} shows a colorful pentagon without colorful squares
obtained by the construction of Proposition \ref{Pr:4S2}.

We have shown that if $S$ is an eventually solid submonoid of $\mathbb{N}(2)$
containing $3$ or $4$ then $S\in\mathcal{S}$. An older version of this paper
asked the question whether every eventually solid submonoid of $\mathbb{N}(2)$
can be found in $\mathcal{S}$. We now know that this is not the case, thanks to
results of Boris Alexeev \cite{Alexeev:2005}, who observed that a decagon with
no colorful pentagons is itself not colorful, and proved that a colored graph
with no colorful $n$-gons, for some $n>1$ odd, contains no colorful cycles of
length greater than $2n^2$. See \cite{Alexeev:2005} for more details. Here is
our proof of Alexeev's observation:

\begin{lemma} There is no colorful decagon without colorful pentagons.
\end{lemma}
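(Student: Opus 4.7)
The plan is to argue by contradiction. Suppose $(v_0, v_1, \ldots, v_9)$ is a colorful decagon, with distinct consecutive-edge colors $c_i := \overline{v_i v_{i+1}}$ (indices mod $10$), and suppose no pentagon in the ambient graph is colorful. The first step is to extract the one constraint that a single pentagon supplies for free: for every $i$, the five consecutive vertices $v_i, v_{i+1}, v_{i+2}, v_{i+3}, v_{i+4}$ form a pentagon whose four decagon edges carry the four distinct colors $c_i, c_{i+1}, c_{i+2}, c_{i+3}$, so its fifth edge --- the chord $v_i v_{i+4}$ --- must repeat one of them. This lets me define a function $\phi \colon \mathbb{Z}_{10} \to \mathbb{Z}_{10}$ by $\overline{v_i v_{i+4}} = c_{\phi(i)}$ with $\phi(i) - i \in \{0, 1, 2, 3\} \pmod{10}$ for every $i$.

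Next I would exploit the fact that $\gcd(4, 10) = 2$: successive chords of the form $v_j v_{j+4}$ starting at $v_0$ visit the five even-indexed vertices in the cyclic order $v_0, v_4, v_8, v_2, v_6$, producing a pentagon composed entirely of skip-four chords. Its non-colorfulness forces two of $\phi(0), \phi(4), \phi(8), \phi(2), \phi(6)$ to coincide, and a quick inspection of the allowed ranges shows that distinct values $\phi(i), \phi(j)$ among these five can agree only when $j \equiv i \pm 2 \pmod{10}$, with the common value restricted to a pair of specific indices. The same analysis, shifted by one, applies to the odd pentagon $(v_1, v_5, v_9, v_3, v_7)$. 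Using the rotational symmetry of the decagon I may assume the even coincidence is $\phi(0) = \phi(2)$, leaving a short list of canonical configurations of $\phi$ to examine.

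In each surviving configuration the aim is to construct an explicit ``killer'' pentagon whose five edge colors are forced to be pairwise distinct, contradicting our assumption. The natural candidates combine two of the skip-four chords whose colors have been pinned down with two or three decagon edges, together with at most one further chord whose color is itself constrained by a second application of the single-pentagon argument of step one (applied to another length-four arc of the decagon). The main obstacle is precisely this last case analysis: for each surviving pattern of $\phi$ one must identify a pentagon whose five colors can be shown to be distinct on the basis of the constraints accumulated so far. Once this is verified across the (few) residual configurations, the contradiction establishes the lemma.
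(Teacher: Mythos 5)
Your first two steps are sound and match the opening moves of the paper's proof: the arc pentagon $(v_i,v_{i+1},v_{i+2},v_{i+3},v_{i+4})$ does force $\overline{v_iv_{i+4}}=c_{\phi(i)}$ with $\phi(i)-i\in\{0,1,2,3\}$, and the all-chord pentagons $(v_0,v_4,v_8,v_2,v_6)$ and $(v_1,v_5,v_9,v_3,v_7)$ do force a coincidence $\phi(i)=\phi(i+2)$ with common value in $\{i+2,i+3\}$, hence $\phi(i)-i\in\{2,3\}$ and $\phi(i+2)-(i+2)\in\{0,1\}$. But the write-up stops exactly where the content of the lemma begins: you never exhibit the ``killer'' pentagons, you only describe what they would have to look like, and you concede that finding them ``is the main obstacle.'' As submitted, no contradiction has been derived, so this is a genuine gap rather than a missing detail. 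Moreover, the residual configuration space is larger than your phrase ``a short list'' suggests: after normalizing to $\phi(0)=\phi(2)$, the values $\phi(4),\phi(6),\phi(8)$ and the five odd values remain essentially unconstrained, so ``examining configurations'' is not a small finite check unless further structure is established first.

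The good news is that your framework does close, and without ever leaving the skip-four chords. For each $i$ consider the pentagon $(v_i,v_{i+1},v_{i+2},v_{i+8},v_{i+4})$; its five colors are $c_i$, $c_{i+1}$, $c_{\phi(i+8)}$, $c_{\phi(i+4)}$, $c_{\phi(i)}$, and since the range $\{i+4,\dots,i+7\}$ of $\phi(i+4)$ is disjoint from all the other index sets, non-colorfulness forces $\phi(i)\in\{i,i+1\}$ or $\phi(i+8)\in\{i,i+1\}$. Writing $d(j)=\phi(j)-j$, this says $d(i)\geq 2\Rightarrow d(i-2)\geq 2$, and iterating around a parity class shows that $d(0),d(2),\dots,d(8)$ are either all at most $1$ or all at least $2$ --- which is incompatible with the coincidence $d(i)\geq 2$, $d(i+2)\leq 1$ that your step two extracts from the all-chord pentagon. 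Note that this endgame is genuinely different from the paper's, which instead pins $\overline{v_2v_6}$ down to $\{\gamma_2,\gamma_3\}$ via a mixed pentagon and then squeezes the skip-five chord $\overline{v_0v_5}$ between three pentagons whose admissible color sets have empty intersection; your route avoids skip-five chords entirely, but you must actually write the argument down for the proof to stand.
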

\begin{proof}
Suppose for a contradiction that $v_0v_1\cdots v_9$ is a colorful decagon
without colorful pentagons. Let $\alpha_{ij}$ be the color of $v_iv_j$, and set
$\alpha_{01}=\gamma_0$, $\alpha_{12}=\gamma_1$, $\dots$,
$\alpha_{89}=\gamma_8$, $\alpha_{90}=\gamma_9$, where $\gamma_0$, $\dots$,
$\gamma_9$ are distinct colors.

Since the pentagon $v_0v_1v_2v_3v_4$ is not colorful, we must have
$\alpha_{04}\in\{\gamma_0$, $\gamma_1$, $\gamma_2$, $\gamma_3\}$. Similarly,
the pentagons $v_0v_6v_7v_8v_9$, $v_1v_2v_3v_4v_5$, $v_1v_7v_8v_9v_0$ and
$v_2v_3v_4v_5v_6$ show that $\alpha_{06}\in\{\gamma_6$, $\gamma_7$, $\gamma_8$,
$\gamma_9\}$, $\alpha_{15}\in\{\gamma_1$, $\gamma_2$, $\gamma_3$, $\gamma_4\}$,
$\alpha_{17}\in\{\gamma_0$, $\gamma_7$, $\gamma_8$, $\gamma_9\}$, and
$\alpha_{26}\in\{\gamma_2$, $\gamma_3$, $\gamma_4$, $\gamma_5\}$, respectively.
The pentagon $v_2v_6v_0v_4v_3$ then implies that $\alpha_{26}\in\{\gamma_0$,
$\gamma_1$, $\gamma_2$, $\gamma_3$, $\gamma_6$, $\gamma_7$, $\gamma_8$,
$\gamma_9\}$, and hence $\alpha_{26}\in\{\gamma_2$, $\gamma_3\}$.

We finish the proof by eliminating all possible colors for $\alpha_{05}$. The
pentagon $v_0v_5v_6v_7v_1$ shows that $\alpha_{05}\in\{\gamma_0$, $\gamma_5$,
$\gamma_6$, $\gamma_7$, $\gamma_8$, $\gamma_9\}$, and $v_0v_5v_6v_2v_1$ implies
$\alpha_{05}\in\{\gamma_0$, $\gamma_1$, $\gamma_2$, $\gamma_3$, $\gamma_5\}$.
Finally, $v_0v_5v_1v_2v_6$ yields $\alpha_{05}\in\{\gamma_1$, $\gamma_2$,
$\gamma_3$, $\gamma_4$, $\gamma_6$, $\gamma_7$, $\gamma_8$, $\gamma_9\}$, and
we are through.
\end{proof}

We close this section with another construction in the positive direction:

\begin{proposition}
\label{Pr:MinusOne} For every $m>2$ there is a colorful $2m$-gon without
colorful $(2m-1)$-gons.
\end{proposition}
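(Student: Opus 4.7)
My plan is constructive: exhibit an explicit coloring of $K_{2m}$ that admits a colorful $2m$-gon but forbids any colorful $(2m-1)$-gon. Label the vertices $v_0,\ldots,v_{2m-1}$ cyclically and fix $2m$ distinct colors $\gamma_0,\ldots,\gamma_{2m-1}$. Color the Hamilton-cycle edge $v_iv_{i+1}$ by $\gamma_i$ (indices mod $2m$); the $2m$-gon $v_0v_1\cdots v_{2m-1}$ is then colorful, settling the first requirement. The substance of the proof is to color the $\binom{2m}{2}-2m$ chords, drawing from the same palette, so that every $(2m-1)$-cycle in the resulting graph contains a repeated color.

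Any $(2m-1)$-cycle $C$ in $K_{2m}$ omits a unique vertex $v_k$ and induces a Hamilton cycle on the remaining $2m-1$ vertices; since the Hamilton path on $V\setminus\{v_k\}$ has only $2m-2$ edges, $C$ uses at least one chord. I would partition the $(2m-1)$-cycles according to the number $t\geq 1$ of chords they contain. For $t=1$, the cycle is forced to be the shortcut $v_0v_1\cdots v_{k-1}v_{k+1}\cdots v_{2m-1}$ using the single chord $v_{k-1}v_{k+1}$; assigning each short chord $v_{k-1}v_{k+1}$ the color $\gamma_{k+1}$ produces a clash with the Hamilton edge $v_{k+1}v_{k+2}\in C$. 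For $t\geq 2$, I color the longer chords by a rule that should force either two chords to share a color or a chord to match a Hamilton edge remaining in $C$.

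The main obstacle is the design of this long-chord rule: it must defeat every $(2m-1)$-cycle with $t\geq 2$ simultaneously, while keeping the Hamilton $2m$-gon colorful. A natural symmetric candidate---assigning the chord $v_iv_j$ (with $i<j$ and $2\leq j-i\leq 2m-2$) a color $\gamma_\ell$ for some index $\ell$ strictly between $i$ and $j$---resolves many subcases but fails on specific configurations for small $m$, notably $(2m-1)$-cycles that use two diameters together with several short chords. My plan is to tune the rule by direct enumeration in the case $m=3$ (and if necessary $m=4$), extract a uniform prescription, and then verify the general case by a case analysis indexed by the omitted vertex $v_k$, the number $t$ of chords, and the way those chords subdivide the Hamilton path on $V\setminus\{v_k\}$. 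Because any admissible rule yields a finite construction whose $(2m-1)$-cycles are enumerated combinatorially, the verification will reduce to a bounded, if intricate, check.
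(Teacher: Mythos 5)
There is a genuine gap: the heart of the construction---the rule for coloring the chords so that every $(2m-1)$-cycle with two or more chords repeats a color---is never actually specified. You correctly set up the colorful Hamilton $2m$-gon on $2m$ distinct colors, and your analysis of the $t=1$ case (the unique one-chord $(2m-1)$-cycle obtained by shortcutting past the omitted vertex) is sound. But for $t\geq 2$ you only describe the properties a rule \emph{should} have, concede that your natural candidate (coloring the chord $v_iv_j$ by some $\gamma_\ell$ with $\ell$ strictly between $i$ and $j$) fails on specific configurations, and defer the fix to enumeration for small $m$ followed by an unspecified ``uniform prescription.'' As it stands there is no coloring on the table, so nothing to verify; and the difficulty is not a bounded check, since the case analysis you propose (indexed by the omitted vertex, the number of chords, and how they subdivide the path) grows without bound in $m$ unless the coloring is chosen to collapse it.

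The idea that closes the gap---and the one the paper uses---is to make the chord colors as \emph{non}-distinct as possible rather than locally varied. Fix four consecutive vertices $v_{-2},v_{-1},v_1,v_2$ with perimeter colors $\alpha_{-1},\alpha_0,\alpha_1$ on $v_{-2}v_{-1}$, $v_{-1}v_1$, $v_1v_2$; set $\overline{v_{-2}v_1}=\alpha_1$ and $\overline{v_{-1}v_2}=\alpha_{-1}$, and color \emph{every other} chord $\alpha_0$. Since $\alpha_0$ already appears on the perimeter edge $v_{-1}v_1$, a colorful $(2m-1)$-gon can contain at most one $\alpha_0$-edge in total, which immediately disposes of any cycle with two or more generic chords; the remaining cases are finished by a short argument on the number of crossings among the chords of the cycle (drawn inside the convex $2m$-gon), together with the observation that $v_{-2}v_1$ and $v_{-1}v_2$ cannot both lie on a colorful $(2m-1)$-gon. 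Your ``spread-out'' rule $\gamma_\ell$ with $i<\ell<j$ works against you precisely because it keeps chords colorful among themselves; the winning move is the opposite one.
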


% FIGURE (CONSTRUCTION)
\setlength{\unitlength}{1.3mm}
\begin{figure}[th]
\begin{center}
\begin{small}
\input{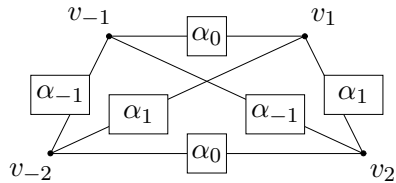}
\end{small}
\end{center}
\caption{Construction of Proposition \ref{Pr:MinusOne}.}\label{Fg:Clique}
\end{figure}

\begin{proof}
Draw the complete graph $K$ on $2m$ vertices in the usual way, as a convex
$2m$-gon $P$ on the perimeter and all remaining inner edges as straight line
segments inside $P$. We say that two inner edges \emph{cross} if they have a
point in common that is not a vertex of $K$. Color $P$ by $2m$ distinct colors.
Pick four consecutive vertices on $P$, say $v_{-2}$, $v_{-1}$, $v_{1}$,
$v_{2}$, and assume that $\overline{v_{-2}v_{-1}}=\alpha_{-1}$,
$\overline{v_{-1}v_{1}}=\alpha_{0}$, $\overline{v_{1}v_{2}}=\alpha_{1}$. Color
inner edges as follows: $\overline{v_{-2}v_{1}}=\alpha_{1}$, $\overline
{v_{-1}v_{2}}=\alpha_{-1}$, all remaining inner edges are colored $\alpha_{0}$.
The clique $\{v_{-2}$, $v_{-1}$, $v_{1}$, $v_{2}\}$ is depicted in Figure
\ref{Fg:Clique}.

Let $H$ be a colorful $(2m-1)$-gon in the above coloring, and let $n$ be the
number of crossings among the edges of $H$. If $n=0$, then $H$ lies on $P$ with
the exception of one edge $e$ that skips a vertex on $P$. If $e$ skips $v_{-1}$
then $H$ has two edges colored $\alpha_{1}$, namely $v_{-2}v_{1}$ and
$v_{1}v_{2}$. If $e$ skips $v_{1}$ then $H$ has two edges colored $\alpha
_{-1}$, namely $v_{-2}v_{-1}$ and $v_{-1}v_{2}$. If $e$ skips any other vertex,
then $H$ has two edges colored $\alpha_{0}$, namely $e$ and $v_{-1}v_{1}$.

We claim that $v_{-2}v_{1}$ and $v_{-1}v_{2}$ cannot both lie in $H$. Assume
they do. If $v_{-1}v_{1}$ is also in $H$, then $H$ has two edges colored
$\alpha_{0}$, since it must have another inner edge besides $v_{-2}v_{1}$,
$v_{-1}v_{2}$. So $v_{-1}v_{1}$ is not in $H$. Since
$\overline{v_{-2}v_{-1}}=\alpha_{-1}$, $H$ must continue from $v_{-1}$ via some
inner edge colored $\alpha_{0}$. Since $\overline{v_{1}v_{2}}=\alpha_{1}$, $H$
must continue from $v_{1}$ via some inner edge colored $\alpha_{0}$, a
contradiction. We have proved the claim.

Assume $n=1$. Since the crossing edges of $H$ have distinct colors, say
$\alpha$ and $\beta$, either the color $\alpha$ is $\alpha_{1}$ or
$\alpha_{-1}$. There are therefore three scenarios: (i) $\alpha=\alpha_{1}$
and $\beta=\alpha_{-1}$. Then both $v_{-2}v_{1}$, $v_{-1}v_{2}$ are in $H$,
contrary to the claim. (ii) $\alpha=\alpha_{1}$ and $\beta=\alpha_{0}$. Then
$v_{-2}v_{1}$ is in $H$. But then $H$ cannot continue from $v_{1}$, since all
edges containing $v_{1}$ are colored $\alpha_{1}$ or $\alpha_{0}$. (iii)
$\alpha=\alpha_{-1}$ and $\beta=\alpha_{0}$. Then we are in a situation dual
to (ii).

Assume $n\ge 2$. Then $H$ has at least three inner edges, since two
edges only cross once. Hence all three colors $\alpha_{-1}$, $\alpha_{0}$,
$\alpha_{1}$ must be assigned to inner edges of $H$, and we have once again
violated the claim.
\end{proof}

\begin{problem} Characterize $\mathcal{S}$, the set of all spectra of complete
colored graphs.
\end{problem}

% GALLAI CLIQUES

\section{Gallai cliques\label{Sec:GallCliq}}

The basic building blocks of Gallai cliques are the $2$-cliques, i.e., cliques
$a$ such that $\left\vert \overline{aa}\right\vert \leq2$, for a clique is
Gallai iff it can be iteratively built up from $2$-cliques. We flesh out this
result in Theorem \ref{4}, in more detail than would be strictly necessary if
that theorem were our only purpose. But the additional detail, and in
particular the concept of factor clique, is necessary for the subsequent
analysis of exact Gallai cliques in the following sections.

The fact that Gallai cliques can be iteratively built up from $2$-cliques
follows from Theorem \ref{Thm:H2C=Gall}. Following \cite{GyarfasSimonyi:2004},
we attribute this result to Gallai, for it is implicit in \cite{Gallai:1967}.
This theorem can also be found among the results of Cameron and Edmonds in
\cite{CameronEdmonds:1997}, and a nice proof is in \cite{GyarfasSimonyi:2004}.

\begin{definition}
Let $a$ be a clique in a colored graph, and let $\Delta\subseteq\Gamma$. A
$\Delta$\emph{-relation on }$a$ is an equivalence relation $R\subseteq a\times
a$ such that for all $u,v\in a$,
\begin{displaymath}
    \left(  u,v\right)  \notin R\Longrightarrow\overline{uv}\in\Delta.
\end{displaymath}
A $2$\emph{-relation on }$a$ is a $\Delta$-relation on $a$ for some
$\Delta\subseteq\Gamma$ such that $\left\vert \Delta\right\vert \leq2$. A
$\Delta$-relation is said to be \emph{homogenous} if for all $u_{i}$,
$v_{i}\in a$,
\begin{displaymath}
    \left(  \left(  u_{1},u_{2}\right)  ,\left(  v_{1},v_{2}\right)  \in R\text{
    and }\left(  u_{1},v_{1}\right)  \notin R\right)  \Longrightarrow
    \overline{u_{1}v_{1}}=\overline{u_{2}v_{2}}.
\end{displaymath}
The descriptive adjectives of the relations apply to the partitions they
induce, giving the terms $\Delta$\emph{-partition}, $2$\emph{-partition}, and
\emph{homogenous partition}.
\end{definition}

To rephrase the definition, a $\Delta$-partition of $a$ is a pairwise disjoint
family $A$ of cliques whose union is $a$ and which satisfies
\begin{displaymath}
    \bigcup\left\{  \overline{a_{1}a_{2}}:a_{i}\in A,\ a_{1}\neq a_{2}\right\}
    \subseteq\Delta,
\end{displaymath}
and the relation is homogeneous if $\left\vert \overline{a_{1}a_{2}}\right\vert
=1$ for all $a_{i}\in A$ such that $a_{1}\neq a_{2}$.

\begin{theorem}
\label{Thm:H2C=Gall}A nonsingleton Gallai clique admits a nontrivial
homogeneous $2$-partition.
\end{theorem}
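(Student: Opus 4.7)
\emph{Plan.} I would follow the classical strategy of Gallai. Let $a$ be the given nonsingleton Gallai clique and let $\Gamma_a = \overline{aa}$ be the palette of colors occurring on $a$. If $|\Gamma_a|\le 1$, the partition of $a$ into its $|a|\ge 2$ singletons is trivially a nontrivial homogeneous $1$-partition, and a fortiori a $2$-partition. I may therefore assume $|\Gamma_a|\ge 2$. My plan is to exhibit two colors $\alpha,\beta\in\Gamma_a$ such that the spanning subgraph
\begin{displaymath}
H_{\alpha,\beta}\;:=\;\{\,uv\in E:\overline{uv}\notin\{\alpha,\beta\}\,\}
\end{displaymath}
of $a$ is disconnected on $V(a)$, and then take the partition $\{a_1,\ldots,a_m\}$ to be the family of connected components of $H_{\alpha,\beta}$.

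\emph{Between-block colors and homogeneity.} That every edge between two distinct components $a_i,a_j$ has color in $\{\alpha,\beta\}$ is immediate from the definition of $H_{\alpha,\beta}$-components, so the partition is automatically a $\{\alpha,\beta\}$-partition. To verify homogeneity, i.e.\ that only one of $\alpha,\beta$ occurs between any fixed pair $a_i,a_j$, I would argue by contradiction. Suppose both colors appeared; a trivial reduction (looking at the edges from one vertex in $a_j$ to $a_i$, or one in $a_i$ to $a_j$) produces $u,u'\in a_i$ and $v\in a_j$ with $\overline{uv}=\alpha$ and $\overline{u'v}=\beta$. Pick an $H_{\alpha,\beta}$-path $u=w_0,w_1,\ldots,w_l=u'$ inside $a_i$. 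Since $v\notin a_i$, each edge $w_kv$ is between-block and hence $\overline{w_kv}\in\{\alpha,\beta\}$; this sequence of colors starts at $\alpha$ and ends at $\beta$, so at some index $k$ it flips, giving $\overline{w_kv}=\alpha$ and $\overline{w_{k+1}v}=\beta$. Then the triangle $(w_k,w_{k+1},v)$ displays three distinct colors $\overline{w_kw_{k+1}}\notin\{\alpha,\beta\}$, $\alpha$, and $\beta$, a colorful triangle contradicting the Gallai hypothesis.

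\emph{Nontriviality.} What remains, and what I expect to be the main obstacle, is choosing $\alpha,\beta$ so that $H_{\alpha,\beta}$ is actually disconnected on $V(a)$ (i.e.\ $m\ge 2$). When $|\Gamma_a|=2$, take $\{\alpha,\beta\}=\Gamma_a$ and $H_{\alpha,\beta}$ is edgeless, so $m=|a|\ge 2$. When $|\Gamma_a|\ge 3$, however, the singleton partition is ruled out by the $|\Delta|\le 2$ constraint, and the Gallai hypothesis must be used structurally. The standard remedy is an extremal argument: choose $\alpha,\beta$ so as to maximize the number of edges colored $\alpha$ or $\beta$ (equivalently, minimize $|E(H_{\alpha,\beta})|$), and suppose toward contradiction that $H_{\alpha,\beta}$ were still spanning connected. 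One then picks a third color $\gamma$ appearing on an $H_{\alpha,\beta}$-edge and, by a careful variant of the triangle-chasing in the previous paragraph applied along suitable $H_{\alpha,\beta}$-paths, produces a colorful triangle. This nontriviality step is the genuine content of Gallai's theorem; the rest is essentially definitional bookkeeping.
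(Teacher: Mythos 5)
Your reduction of the theorem to finding two colors $\alpha,\beta$ for which the graph $H_{\alpha,\beta}$ of edges colored outside $\{\alpha,\beta\}$ is disconnected is sound, and your triangle-chasing verification that the resulting component partition is homogeneous is correct. (Note that the paper does not prove Theorem \ref{Thm:H2C=Gall} internally; it cites \cite{Gallai:1967}, \cite{CameronEdmonds:1997} and \cite{GyarfasSimonyi:2004}, so there is no in-paper argument to compare against.) The difficulty is exactly where you locate it: producing such a pair when three or more colors occur. You leave that step as a sketch, and the sketch you give --- choose $\alpha,\beta$ maximizing the number of edges colored $\alpha$ or $\beta$, then extract a colorful triangle if $H_{\alpha,\beta}$ is still connected --- cannot be completed, because the extremal pair can fail to disconnect in a genuine Gallai clique, where no colorful triangle exists to be found.

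Concretely: let $B$ be the simple $5$-clique on $v_1,\dots,v_5$ whose monochromes are the two $5$-cycles $v_1v_2v_3v_4v_5$ (color $1$) and $v_1v_3v_5v_2v_4$ (color $2$); replace $v_1$ by a set $M$ of $k\ge 7$ new vertices, color all edges inside $M$ with a new color $3$, and give each edge from $M$ to $v_j$ the old color of $v_1v_j$. This is a tree $2$-clique, hence Gallai by Proposition \ref{2}. Colors $1$ and $2$ each occur on $2k+3$ edges while color $3$ occurs on $\binom{k}{2}>2k+3$ edges, so the coverage-maximizing pair is $\{1,3\}$ (or $\{2,3\}$), not $\{1,2\}$. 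But $H_{1,3}$ is the color-$2$ subgraph, which is connected and spanning: every vertex of $M$ is joined in color $2$ to $v_3$ and to $v_4$, and $v_3v_5$, $v_5v_2$, $v_2v_4$ is a color-$2$ path. Thus your extremal choice yields only the one-block partition, and no contradiction is available since the clique is Gallai; the pair that actually works is $\{1,2\}$, whose $H$ is the clique on $M$ with components $M,\{v_2\},\{v_3\},\{v_4\},\{v_5\}$. So the existence of a disconnecting pair --- which, as you say yourself, is the genuine content of Gallai's theorem --- remains unproved, and the particular mechanism you propose for it is refuted; a real argument (e.g.\ the inductive proof in \cite{GyarfasSimonyi:2004}) is needed at this point.
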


It is already clear from Theorem \ref{Thm:H2C=Gall} that Gallai cliques are
iteratively built up from $2$-cliques. What is necessary now is to identify,
conceptually and notationally, the particular $2$-cliques used in the
formation of a given Gallai clique. Thus we are led to the notions of
hereditary $2$-clique and of tree $2$-clique.

\begin{definition}
\label{7}We inductively define a \emph{hereditary }$2$\emph{-clique} as
follows. A singleton clique is a hereditary $2$-clique. If a clique admits a
homogeneous $2$-partition whose parts are hereditary $2$-cliques then the
clique itself is a hereditary $2$-clique.
\end{definition}

A \emph{tree} is a finite poset $T$ in which every pair of unrelated elements
has a common upper bound but no common lower bound. In such a poset we define
\begin{displaymath}
    s\prec t\Longleftrightarrow\left(  s<t\text{ and }\forall\ r\ \left(  s\leq
    r\leq t\Longrightarrow r=s\text{ or }r=t\right)  \right)  ,
\end{displaymath}
and we say that $t$\emph{\ is the parent of }$s$, and that $s$\emph{\ is a
child of }$t$. We say that $s$\emph{\ is an offspring of }$t$, and that
$t$\emph{\ is an ancestor of }$s$, if $s<t$. Elements $s$ and $t$ of $T$ are
said to be \emph{siblings} if they are unrelated but share a parent. Note that
every pair of unrelated elements are the offspring of siblings. A childless
element is called a \emph{leaf}, and the set of leaves is called the
\emph{yield} of the tree,
\begin{displaymath}
    K\left(  T\right)  \equiv\left\{  t:t\text{ is a leaf}\right\}  .
\end{displaymath}
The largest element of a tree is referred to as its \emph{root}, and the
\emph{height} of a tree is the length of a longest path from a leaf to the root.

With a given tree $T$ we associate two graphs. The \emph{sibling graph
}$S\left(  T\right)  $ has as vertices the elements of $T$ and as edges all
those of the form $st$, where $s$ and $t$ are siblings. The \emph{leaf graph
}$K\left(  T\right)  $ is the complete graph on the yield of $T$. An
\emph{edge coloring of }$S\left(  T\right)  $, or simply a \emph{coloring of
}$S\left(  T\right)  $, is an assignment of a color, denoted $\widehat{st}$,
to each edge $st$. We use $\widehat{\bullet}$ to denote the color map itself.
If $\widehat{\bullet}$ has the additional property that for every $t\in
T\smallsetminus K\left(  T\right)  $
\begin{displaymath}
    \left\vert \left\{  \widehat{rs}:r\text{ and }s\text{ are distinct children of
    }t\right\}  \right\vert \leq2,\;
\end{displaymath}
then we say that $\widehat{\bullet}$ is a $2$\emph{-coloring of }$S\left(
T\right)  $.

\begin{proposition}
\label{5}Let $T$ be a tree. Any coloring $\widehat{\bullet}$ of $S\left(
T\right)  $ gives rise to a coloring $\overline{\bullet}$ of $K\left( T\right)$
by the rule
\begin{displaymath}
    \overline{st}\equiv\widehat{uv},
\end{displaymath}
where $u$ and $v$ are the respective sibling ancestors of $s$ and $t$. Such a
coloring satisfies
\begin{displaymath}
    \overline{st}=\overline{rt}
\end{displaymath}
whenever $s$ and $r$ have a common ancestor unrelated to $t$, and any coloring
of $K\left(  T\right)  $ with this property arises by this rule from a
coloring of $S\left(  T\right)  $.
\end{proposition}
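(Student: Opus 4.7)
The plan is to split the statement into three claims: (a) for each pair of distinct leaves $s,t$, the assignment $\overline{st} := \widehat{uv}$ is well defined; (b) the coloring so produced satisfies the stated constancy property; and (c) conversely, any coloring of $K(T)$ with that property is realized by some coloring of $S(T)$ under the rule.

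For (a), I would first observe that in any tree the set of ancestors of a fixed element is a chain, because two incomparable ancestors of $s$ would have $s$ as a common lower bound, violating the tree axiom. Distinct leaves $s$ and $t$ are unrelated, hence share an upper bound; all such common upper bounds lie in the ancestor chain of $s$, so they form a finite chain with a least element $p = \mathrm{LCA}(s,t)$. Walking one step down from $p$ yields a unique child $u$ of $p$ with $s \leq u$ and a unique child $v$ of $p$ with $t \leq v$; these children must be distinct, for a common value would give a smaller upper bound of $\{s,t\}$. Thus the pair of sibling ancestors $(u,v)$ is unambiguously defined and the rule produces a well-defined coloring of $K(T)$.

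For (b), assume $s$ and $r$ share a common ancestor $w$ that is unrelated to $t$, and let $p,u,v$ be as in (a) for the pair $(s,t)$. Since $w$ and $p$ are both ancestors of $s$ they are comparable; the possibility $w \geq p$ would give $w \geq t$, contradicting the unrelatedness of $w$ and $t$, so $w < p$. Similarly $w$ is comparable to $u$; $w > u$ would force $w \geq p$ because $u \prec p$, hence $w \leq u$ and therefore $r \leq u$. To conclude, I would verify that $p = \mathrm{LCA}(r,t)$: a strictly smaller common upper bound $q < p$ of $r,t$ is comparable to $u$ (both are ancestors of $r$); the case $q \leq u$ would make $u$ an ancestor of $t$, forcing the siblings $u$ and $v$ to be comparable, while $u < q < p$ is impossible since $u \prec p$. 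Hence the sibling ancestors of $(r,t)$ coincide with those of $(s,t)$, and $\overline{rt} = \widehat{uv} = \overline{st}$.

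For (c), given any coloring $\overline{\bullet}$ of $K(T)$ with the constancy property, I define $\widehat{uv}$ for each pair of siblings $u,v$ by picking arbitrary leaves $s \leq u$ and $t \leq v$ and setting $\widehat{uv} := \overline{st}$. For well-definedness, let $s' \leq u$ and $t' \leq v$ be alternative choices. The node $u$ is a common ancestor of $s$ and $s'$ and is unrelated to $t$ — any comparability of $u$ and $t$ would, via $t \leq v$, force $u$ and $v$ comparable — so the constancy property yields $\overline{st} = \overline{s't}$. The symmetric argument using the common ancestor $v$ of $t$ and $t'$ gives $\overline{s't} = \overline{s't'}$. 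By construction, the formation rule applied to this $\widehat{\bullet}$ returns the original $\overline{\bullet}$. The genuine obstacle I expect is the verification in (b) that $\mathrm{LCA}(r,t) = p$; this is where the tree axiom does its real work, through the chain property of ancestors and the immediate-predecessor relation $u \prec p$.
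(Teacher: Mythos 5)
Your proof is correct. Note that the paper states Proposition \ref{5} without any proof at all, treating the verification as routine; your three-part argument (well-definedness of the sibling ancestors via the least common upper bound, the constancy property via the transfer $\mathrm{LCA}(s,t)=\mathrm{LCA}(r,t)$, and the converse construction) supplies exactly the details the paper leaves implicit, and each step --- in particular the repeated use of the tree axiom to show that ancestors of a fixed element form a chain, and the use of $u\prec p$ to exclude anything strictly between $u$ and $p$ --- is sound. The only cosmetic caveat is that the paper's ``ancestor'' is strict, so in part (c) the case where $u$ or $v$ is itself a leaf should be noted as degenerate (there is then no alternative choice of $s'$ or $t'$), but this does not affect the argument.
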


We refer to a clique $a$ as a \emph{tree clique} if there is some tree $T$ and
some coloring of $S\left(  T\right)  $ such that, when $K\left(  T\right)  $
is colored as in Proposition \ref{5}, $a$ is isomorphic to $K\left(  T\right)
$. This means that there is a bijection from $a$ onto the leaves of $T$ which
preserves the color of the edges. If the coloring of $S\left(  T\right)  $ is
a $2$-coloring, we refer to $a$ as a \emph{tree }$2$\emph{-clique}.

\begin{proposition}
\label{2}A tree $2$-clique is Gallai.
\end{proposition}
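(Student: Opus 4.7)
My plan is to prove directly that, under the described coloring, no three leaves of the tree form a colorful triangle. Fix a tree $T$ with a $2$-coloring $\widehat{\bullet}$ of $S(T)$, and let $\overline{\bullet}$ be the induced coloring of $K(T)$ as in Proposition \ref{5}. Pick three distinct leaves $s_1,s_2,s_3$. For each pair $s_i,s_j$ let $m_{ij}$ denote their least common ancestor; then the sibling ancestors of $s_i$ and $s_j$ in the pair are the unique children of $m_{ij}$ lying above $s_i$ and $s_j$ respectively. The key tree-theoretic fact I will use is that among the three meets $m_{12},m_{13},m_{23}$, either all three coincide, or two of them coincide and the third lies strictly below. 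This follows from the assumption that unrelated elements of $T$ have a common upper bound but no common lower bound.

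I will then split into these two cases. In the first case, after relabeling, $m_{12}<m_{13}=m_{23}=:m$. Since $s_1$ and $s_2$ both sit under $m_{12}<m$, they lie in the same child $c$ of $m$, while $s_3$ lies in a different child $c'$. The defining rule for $\overline{\bullet}$ then gives $\overline{s_1s_3}=\widehat{cc'}=\overline{s_2s_3}$, so the triangle has a repeated color. In the second case, $m_{12}=m_{13}=m_{23}=:m$, so $s_1,s_2,s_3$ lie in three distinct children $c_1,c_2,c_3$ of $m$, and the three triangle edges acquire the colors $\widehat{c_1c_2},\widehat{c_1c_3},\widehat{c_2c_3}$. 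Here the hypothesis that $\widehat{\bullet}$ is a $2$-coloring bites: the set of colors on edges between distinct children of $m$ has size at most $2$, so two of the three edges must share a color.

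In each case the triangle fails to be colorful, so the clique is Gallai. The main obstacle is purely bookkeeping, namely verifying correctly that the sibling ancestors of $s_i,s_j$ in the pair $\{s_i,s_j\}$ are precisely the children of $m_{ij}$ above $s_i$ and $s_j$, and that when $m_{12}<m_{13}=m_{23}$ the leaves $s_1$ and $s_2$ share the same sibling ancestor relative to $s_3$. Once this identification is made, both cases are immediate from the definitions and from the $2$-coloring hypothesis.
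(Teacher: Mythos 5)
Your proof is correct and follows essentially the same route as the paper's: the two substantive cases (two leaves under a common child of the relevant node, giving a repeated color via $\overline{s_1s_3}=\widehat{cc'}=\overline{s_2s_3}$, versus three leaves under three distinct children, where the $2$-coloring bound caps the triangle at two colors) are exactly the paper's cases. The only difference is that the paper inducts on the height of the tree and argues at the root, absorbing the case of all three leaves under one child into the induction hypothesis, whereas you descend directly to the least common ancestor; this is a harmless repackaging, and your supporting tree facts (the three pairwise meets are pairwise comparable with the top two coinciding, and the sibling ancestors of an unrelated pair are precisely the children of their meet) all check out.
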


\begin{proof}
We induct on the height of the tree. Consider vertices $u_{i}$, $1\leq i\leq
3$, in $a=K\left(  T\right)  $, where the edges of $a$ derive their colors from
a $2$-coloring of $S\left(  T\right)  $ as in Proposition \ref{5}. Label the
root of $T$ as $t_{0}$, and its children $t_{1}$, $t_{2}$, \ldots, $t_{n}$. If
all three vertices are offspring of a single $t_{i}$, the triangle they form
lies in $V\left(  \downarrow\!t_{i}\right)  $, the tree $2$-clique of the
subtree rooted at $t_{i}$. Since this subtree has height less than that of $T$,
the triangle is not colorful by the induction hypothesis. If two of the
vertices, say $u_{1}$ and $u_{2}$, are offspring of one $t_{i}$, while the
third vertex $u_{3}$ is the offspring of another $t_{j}$, $i\neq j$, then
\begin{displaymath}
    \overline{u_{1}u_{3}}=\widehat{t_{i}t_{j}}=\overline{u_{2}u_{3}}.
\end{displaymath}
If all three vertices are offspring of distinct children, say $u_{i}\leq$
$t_{j_{i}}$ for distinct $j_{i}$, $1\leq i\leq3$, then because $S\left(
T\right)  $ carries a $2$-coloring,
\begin{displaymath}
    \left\vert \left\{  \overline{u_{i}u_{k}}:1\leq i\neq k\leq3\right\}
    \right\vert =\left\vert \left\{  \widehat{t_{j_{i}}t_{j_{k}}}:1\leq i\neq
    k\leq3\right\}  \right\vert \leq2.
\end{displaymath}
Thus in any case the triangle formed by the $u_{i}$s is not colorful.
\end{proof}

\begin{proposition}
\label{6}A clique $a$ is a tree $2$-clique iff it is a hereditary $2$-clique.
\end{proposition}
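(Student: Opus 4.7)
The plan is to prove both implications by induction, the forward one on the height of the tree and the reverse one on the depth of the hereditary construction (equivalently on $|a|$).

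For the forward implication, suppose $a$ is isomorphic (via a color-preserving bijection) to $K(T)$ where $S(T)$ carries a $2$-coloring $\widehat{\bullet}$. If $T$ has height $0$, then $a$ is a singleton and hence hereditary by definition. Otherwise, let $t_1,\ldots,t_n$ be the children of the root $t_0$, and let $a_i$ be the set of leaves of the subtree $\downarrow\!t_i$, so that $\{a_1,\ldots,a_n\}$ partitions $a$. For leaves $s\in a_i$ and $r\in a_j$ with $i\neq j$, the respective sibling ancestors of $s$ and $r$ are $t_i$ and $t_j$, so by the rule in Proposition~\ref{5},
\begin{displaymath}
\overline{sr}=\widehat{t_it_j},
\end{displaymath}
which depends only on $i$ and $j$. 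Thus the partition is homogeneous, and the set of colors appearing between distinct parts is $\{\widehat{t_it_j}:i\neq j\}$, which has at most two elements because $\widehat{\bullet}$ is a $2$-coloring. Each subtree $\downarrow\!t_i$ carries a $2$-coloring of its own sibling graph (inherited from $\widehat{\bullet}$) and has height strictly less than that of $T$, so by the induction hypothesis every $a_i$ is a hereditary $2$-clique. Hence $a$ is too.

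For the reverse implication, suppose $a$ is a hereditary $2$-clique. If $a$ is a singleton, then $a=K(T)$ for the one-point tree, and the (empty) sibling graph is trivially $2$-colored. Otherwise, pick a witnessing homogeneous $2$-partition $\{a_1,\ldots,a_n\}$ of $a$; by induction each $a_i$ is a tree $2$-clique, realized as $K(T_i)$ for some tree $T_i$ with a $2$-coloring $\widehat{\bullet}_i$ of $S(T_i)$. Form $T$ by taking the disjoint union of the $T_i$ and adjoining a new top element $t_0$ above all of them; then $T$ is again a tree (elements in different $T_i$ share $t_0$ as upper bound but no lower bound). The yield of $T$ is the disjoint union of the yields of the $T_i$, hence identifies naturally with $a$. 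Define a coloring $\widehat{\bullet}$ on $S(T)$ by using $\widehat{\bullet}_i$ on edges within each $T_i$, and by setting $\widehat{r_ir_j}\equiv\overline{a_ia_j}$ for the edges between the roots $r_i$ of the $T_i$, which are now siblings under $t_0$. Homogeneity of the partition guarantees $|\overline{a_ia_j}|=1$, so this assignment is well defined; the $2$-partition hypothesis further guarantees that the colors on sibling-pairs under $t_0$ use at most two distinct values, while the colors on sibling-pairs under any $t\in T_i\smallsetminus K(T_i)$ use at most two values by the inductive $2$-coloring property of $\widehat{\bullet}_i$. Thus $\widehat{\bullet}$ is a $2$-coloring of $S(T)$.

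It remains to check that the edge coloring of $K(T)$ induced by $\widehat{\bullet}$ via Proposition~\ref{5} agrees with the original coloring on $a$. For $s,t$ leaves lying in the same $a_i$, their sibling ancestors lie in $T_i$ and the induced color agrees with the coloring of $a_i$ by the induction hypothesis; for leaves $s\in a_i$ and $t\in a_j$ with $i\neq j$, their sibling ancestors are $r_i$ and $r_j$, so the induced color is $\widehat{r_ir_j}=\overline{a_ia_j}=\overline{st}$, as required. The main routine obstacle is precisely this bookkeeping—verifying that siblings in the enlarged tree, together with the induced coloring from Proposition~\ref{5}, match the homogeneous $2$-partition data—but once the trees are pasted at a new root with root-sibling colors dictated by $\overline{a_ia_j}$, the verification is mechanical.
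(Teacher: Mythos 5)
Your proof is correct and follows essentially the same route as the paper's: induction on the height of the tree for one direction, and pasting the subtrees at a new root colored by $\widehat{t_it_j}\equiv\overline{a_ia_j}$ for the other. You simply spell out the homogeneity and $2$-coloring verifications that the paper leaves implicit.
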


\begin{proof}
Given a hereditary $2$-clique $a$, we build its tree inductively. If $a$ is a
singleton, its tree consists of a single root node. If $a$ admits a homogeneous
$2$-partition into hereditary $2$-cliques $a_{1}$, $a_{2}$, $\ldots$, $a_{k}$,
then for each $i$ there is, by the inductive hypothesis applied to $a_{i}$, a
tree $T_{i}$ and a $2$-coloring of $S\left( T_{i}\right)  $ such that $a_{i}$
is isomorphic to $K\left(  T_{i}\right)  $. Denote the root of each $T_{i}$ by
$t_{i}$. Form the tree $T$ for $a$ by using a new root node $t_{0}$, by
declaring the children of $t_{0}$ to be the $t_{i}$s, and by coloring the
sibling edges of the root by the rule
\begin{displaymath}
    \widehat{t_{i}t_{j}}\equiv\overline{a_{i}a_{j}},\;i\neq j.
\end{displaymath}
The result is a $2$-coloring of $S\left(  T\right)  $ which provides a natural
isomorphism from $a$ onto $K\left(  T\right)  $.

Now let a tree $T$ be given, along with a $2$-coloring of $S\left(  T\right)
$ and the corresponding coloring of $K\left(  T\right)  $ as in Proposition
\ref{5}. We show by induction on the height of $T$ that $K\left(  T\right)  $
is a hereditary $2$-clique. If the height of $T$ is $0$ then $T$ consists of
the root alone, and $K\left(  T\right)  $ is a singleton and therefore a
hereditary $2$-clique. So suppose we have established the result for trees of
height at most $n$, and consider a tree $T$ of height $n+1$ with root $t_{0}$
and children of the root $t_{1}$, $t_{2}$, \ldots, $t_{k}$. Let $T_{i}$ be
$\downarrow\!t_{i}$, the subtree of $T$ rooted at $t_{i}$. Then $a_{i}\equiv
K\left(  T_{i}\right)  $ is a hereditary $2$-clique by the inductive
hypothesis, and the partition into the $a_{i}$s makes $a\equiv K\left(
T\right)  $ into a hereditary $2$-clique as well.
\end{proof}

\begin{corollary}
\label{12}A hereditary $2$-clique is Gallai.
\end{corollary}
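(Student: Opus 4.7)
The plan is to observe that this corollary follows immediately by chaining the two previous results. Proposition \ref{6} provides the equivalence between hereditary $2$-cliques and tree $2$-cliques, and Proposition \ref{2} establishes that every tree $2$-clique is Gallai. So given a hereditary $2$-clique $a$, I would first invoke Proposition \ref{6} to produce a tree $T$ together with a $2$-coloring of $S(T)$ such that $a$ is isomorphic (as a colored graph) to $K(T)$, and then apply Proposition \ref{2} to $K(T)$ to conclude that $a$ contains no colorful triangle, hence is Gallai.

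Since the conclusion is purely about the existence or non-existence of colorful triangles, which is an invariant under color-preserving isomorphism of cliques, no additional argument is required to transfer the Gallai property from $K(T)$ back to $a$. There is no substantive obstacle here; the work has already been done in Propositions \ref{2} and \ref{6}, and the corollary merely records the composite statement. One could, if desired, give a direct inductive proof paralleling Proposition \ref{2}, inducting on the recursive structure of Definition \ref{7}: a singleton is vacuously Gallai, and if $a$ admits a homogeneous $2$-partition into hereditary $2$-cliques $a_1,\ldots,a_k$, then any triangle in $a$ either lies within a single $a_i$ (Gallai by induction) or meets at least two parts, in which case either two vertices share a part (forcing two of the triangle's edges to have the common color of that part's interaction with the third vertex's part, by homogeneity) or all three vertices lie in distinct parts (in which case the triangle's colors come from the at most two colors used between parts). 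But this duplicates the argument in Proposition \ref{2}, so the shorter route through Proposition \ref{6} is preferable.
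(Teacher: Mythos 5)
Your proposal is correct and matches the paper's intent exactly: the corollary is stated without proof precisely because it follows by chaining Proposition \ref{6} (hereditary $2$-cliques are tree $2$-cliques) with Proposition \ref{2} (tree $2$-cliques are Gallai). Your remark that the Gallai property transfers along color-preserving isomorphisms is the only (trivial) detail needed, and your optional direct induction is just a restatement of the argument already in Proposition \ref{2}.
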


The expression of a given hereditary $2$-clique as a tree $2$-clique is by no
means unique. However, every such expression can be maximally refined, and
this is the content of Proposition \ref{1}. This proposition will be required
for the analysis in Section \ref{Sec:ExGallCliq} of exact Gallai cliques.

\begin{definition}
When a clique $a$ is expressed as a tree clique $K\left(  T\right)  $, for
each $t\in T\smallsetminus V\left(  T\right)  $ we refer to the clique of
$S(T)$ of the form
\begin{displaymath}
    \left\{  s:s\prec t\right\}
\end{displaymath}
as \emph{the factor of }$a$\emph{ at }$t$. For $t_{1}<t_{2}$ in
$T\smallsetminus V\left(  T\right)  $, we say that the factor at $t_{2}$ is
\emph{higher} than the factor at $t_{1}$.
\end{definition}

\begin{definition}
A\ clique is said to be \emph{irreducible} if it admits no nontrivial
homogeneous partition. A clique is said to be a \emph{hereditarily irreducible
}$2$\emph{-clique} provided that it can be represented as a tree $2$-clique
with irreducible factors.
\end{definition}

\begin{proposition}
\label{1}Every hereditary $2$-clique is a hereditarily irreducible $2$-clique.
\end{proposition}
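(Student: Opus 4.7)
The plan is to start with an arbitrary tree $2$-clique representation of $a$ and iteratively refine it until every factor is irreducible. By Proposition \ref{6}, since $a$ is a hereditary $2$-clique, there exists a tree $T$ with a $2$-coloring $\widehat{\bullet}$ of $S(T)$ such that $a$ is isomorphic to $K(T)$ with the induced coloring described in Proposition \ref{5}. This provides the starting representation.

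Suppose that some factor $F_t$, at a non-leaf $t$ of the current tree, is not irreducible. Then $F_t$ admits a nontrivial homogeneous partition $\{F_t^1,\dots,F_t^r\}$ (one with at least two blocks, some of size at least $2$). I would refine $T$ by interposing new intermediate nodes between $t$ and its children: for each block $F_t^i$ of size at least $2$, create a new child $t_i$ of $t$ and reassign the elements of $F_t^i$ as the children of $t_i$; for each singleton block $F_t^i=\{c_i\}$, let $t_i\equiv c_i$ and keep it as a direct child of $t$. Extend the coloring by declaring $\widehat{t_it_j}$ to be the common color of the edges between $F_t^i$ and $F_t^j$, well defined by homogeneity. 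Three invariants then require checking: (i) the new factor at $t$ (a clique on $\{t_1,\dots,t_r\}$) and each new factor at some $t_i$ (a sub-clique of $F_t$) draw their colors from those already in $F_t$, so each uses at most two colors, preserving the $2$-coloring property of $S(T')$; (ii) the sibling-graph structure is otherwise unchanged; (iii) the induced leaf coloring on $K(T')$ still coincides with $\overline{\bullet}$, verified by a case analysis on whether two leaves meet above, at, or below $t$ in $T$, and, in the last case, whether their sibling ancestors now lie in the same block $F_t^i$ or in distinct blocks $F_t^i,F_t^j$, the latter situation receiving the same color by the definition of $\widehat{t_it_j}$.

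For termination I would use the complexity measure $\Phi(T)\equiv\sum_{t\in T\setminus K(T)}\binom{|F_t|}{2}$. Replacing $F_t$ of size $k$ by a partition with block sizes $k_1,\dots,k_r$ changes $\Phi$ by $\binom{r}{2}+\sum_i\binom{k_i}{2}-\binom{k}{2}=\binom{r}{2}-\sum_{i<j}k_ik_j$, which is strictly negative whenever some $k_i\ge 2$; nontriviality of the partition guarantees this. Since $\Phi$ is a non-negative integer, the refinement process terminates in a tree $2$-clique representation of $a$ whose every factor is irreducible, which is exactly an expression of $a$ as a hereditarily irreducible $2$-clique. The main obstacle I anticipate is the bookkeeping in item (iii) above, namely tracking how the sibling-ancestor pair of each pair of leaves is rerouted through the newly inserted $t_i$ nodes; once this invariant is established, both the $2$-coloring check (i) and the termination argument are straightforward.
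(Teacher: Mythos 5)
Your proposal is correct and follows exactly the route the paper takes: the paper's own proof is a one-sentence appeal to ``successive refinement,'' and your argument supplies precisely the details that sentence leaves implicit (how to interpose nodes for the blocks of a nontrivial homogeneous partition, why the $2$-coloring and the induced leaf coloring are preserved, and a potential-function argument for termination). No discrepancy to report.
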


\begin{proof}
By a process of successive refinement, the cliques which arise in expressing a
given hereditary $2$-clique as a tree $2$-clique can be rendered irreducible.
Of course, the height of the tree typically increases.
\end{proof}

We summarize our results to this point.

\begin{theorem}
\label{4}The following are equivalent for a complete clique $a$ in a colored graph.

\begin{enumerate}
\item $a$ is Gallai, i.e., $a$ has no colorful triangles.

\item $a$ has no colorful cycles.

\item $a$ is a hereditary $2$-clique.

\item $a$ is a hereditarily irreducible $2$-clique.

\item $a$ is a tree $2$-clique.

\item For disjoint subcliques $b$ and $c$ of $a$,
\begin{displaymath}
    \left\vert \overline{bc}\smallsetminus\overline{bb}\right\vert \leq\left\vert
    c\right\vert .
\end{displaymath}

\item For any subclique $b$ of $a$,
\begin{displaymath}
    \left\vert \overline{bb}\right\vert \leq\left\vert b\right\vert -1.
\end{displaymath}

\end{enumerate}
\end{theorem}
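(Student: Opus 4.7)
The plan is to close the chain $(1) \Rightarrow (3) \Leftrightarrow (4) \Leftrightarrow (5) \Rightarrow (6) \Rightarrow (7) \Rightarrow (1)$; note that $(1) \Leftrightarrow (2)$ is Proposition~\ref{Pr:3=All}. The equivalences $(3) \Leftrightarrow (5)$ and $(3) \Rightarrow (4)$ come from Propositions~\ref{6} and~\ref{1}, and $(4) \Rightarrow (5)$ is immediate from the definition of a hereditarily irreducible $2$-clique. The implication $(3) \Rightarrow (1)$ is Corollary~\ref{12}. For $(1) \Rightarrow (3)$ I would induct on $|a|$: a singleton is hereditary $2$ by definition, and for $|a| \geq 2$, Theorem~\ref{Thm:H2C=Gall} supplies a nontrivial homogeneous $2$-partition whose parts are themselves Gallai (induced subcliques of a Gallai clique admit no new colorful triangles), hence hereditary $2$ by induction.

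The substantive step is $(5) \Rightarrow (6)$, which I would prove by induction on the height of a tree $T$ with $a \cong K(T)$. Writing $t_{0}$ for the root, $t_{1},\ldots,t_{k}$ for its children, and $a_{i} \equiv K(\downarrow t_{i})$, set $b_{i} \equiv b \cap a_{i}$, $c_{i} \equiv c \cap a_{i}$, $I \equiv \{i : b_{i} \neq \emptyset\}$, $J \equiv \{i : c_{i} \neq \emptyset\}$. All edges between distinct $a_{i},a_{j}$ are colored $\widehat{t_{i}t_{j}}$, and by the $2$-coloring condition the set of such top-level colors has size at most $2$. With $Z \equiv \{\widehat{t_{i}t_{j}} : i \in I,\ j \in J,\ i \neq j\}$ and $W \equiv \{\widehat{t_{i}t_{j}} : i, j \in I,\ i \neq j\}$, one sees
\begin{displaymath}
    \overline{bc} \setminus \overline{bb} \;\subseteq\; \bigcup_{i \in I \cap J} \bigl(\overline{b_{i}c_{i}} \setminus \overline{b_{i}b_{i}}\bigr) \;\cup\; (Z \setminus W),
\end{displaymath}
and the inductive hypothesis bounds the first piece by $\sum_{i \in I \cap J} |c_{i}|$. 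It remains to show $|Z \setminus W| \leq \sum_{i \in J \setminus I} |c_{i}|$. If $J \subseteq I$ then $Z \subseteq W$ (the cases $|I| \leq 1$ forcing $Z = \emptyset$ as well). Otherwise $|J \setminus I| \geq 1$, hence $\sum_{i \in J \setminus I} |c_{i}| \geq 1$; when $|I| \geq 2$ the set $W$ already contains one of the at most two top-level colors, forcing $|Z \setminus W| \leq 1$, while when $|I| = 1$ the set $Z$ itself has at most $\min(2, |J \setminus I|) \leq |J \setminus I|$ elements.

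For $(6) \Rightarrow (7)$ I would induct on $|b|$: picking $v \in b$ and applying $(6)$ to the disjoint subcliques $b \setminus \{v\}$ and $\{v\}$ gives $|\overline{bb}| \leq |\overline{(b \setminus \{v\})(b \setminus \{v\})}| + |\overline{(b \setminus \{v\})\{v\}} \setminus \overline{(b \setminus \{v\})(b \setminus \{v\})}| \leq (|b|-2) + 1 = |b|-1$. Finally $(7) \Rightarrow (1)$ is the contrapositive: a colorful triangle $\{u,v,w\}$ is a $3$-subclique $b$ with $|\overline{bb}| = 3 > 2 = |b|-1$. The main obstacle is the bookkeeping in $(5) \Rightarrow (6)$: one must correctly track which top-level colors $\widehat{t_{i}t_{j}}$ are already realized by edges inside $b$ versus which must be accounted for by vertices of $c$ lying in parts disjoint from $b$.
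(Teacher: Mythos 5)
Your proof is correct, and most of its skeleton coincides with the paper's: Proposition \ref{Pr:3=All} for $(1)\Leftrightarrow(2)$, Propositions \ref{1} and \ref{6} for $(3)\Leftrightarrow(4)\Leftrightarrow(5)$, Corollary \ref{12} plus an induction on Theorem \ref{Thm:H2C=Gall} for $(1)\Leftrightarrow(3)$, and the trivial instances $\left\vert b\right\vert=3$ (resp.\ $\left\vert b\right\vert=2$, $\left\vert c\right\vert=1$) for returning to $(1)$. The one place you genuinely diverge is the substantive step toward $(6)$: the paper proves $(1)\Rightarrow(6)$ directly by induction on $\left\vert c\right\vert$, where the base case $\left\vert c\right\vert=1$ is the one-line observation that two edges $uv$, $wv$ with $u,w\in b$ whose colors are distinct and both lie outside $\overline{bb}$ would make $uwv$ a colorful triangle. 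You instead prove $(5)\Rightarrow(6)$ by induction on tree height, splitting $b$ and $c$ along the top-level partition and tracking the top-level colors through your sets $Z$ and $W$. Your bookkeeping checks out: the inclusion $\overline{bc}\smallsetminus\overline{bb}\subseteq\bigcup_{i\in I\cap J}\left(\overline{b_{i}c_{i}}\smallsetminus\overline{b_{i}b_{i}}\right)\cup\left(Z\smallsetminus W\right)$ holds because $W\subseteq\overline{bb}$, and the case analysis on $\left\vert I\right\vert$ and $J\smallsetminus I$ correctly yields $\left\vert Z\smallsetminus W\right\vert\leq\sum_{i\in J\smallsetminus I}\left\vert c_{i}\right\vert$ from the fact that a factor carries at most two colors. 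The paper's route is shorter because the Gallai hypothesis does all the work in the base case; your route costs more notation but makes visible exactly where the bound comes from in the tree representation, and it is sound.
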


\begin{proof}
The equivalence of (1) and (2) is Proposition \ref{Pr:3=All}, that of (3) and (4) is
Proposition \ref{1}, that of (3) and (5) is Proposition \ref{6}, the
implication from (3) to (1) is Corollary \ref{12}, and the implication from
(1) to (3) yields to a simple induction based on Theorem \ref{Thm:H2C=Gall}.
(6) implies (1) by taking $\left\vert b\right\vert =2$ and $\left\vert
c\right\vert =1$, and (1) implies (6) by a simple induction on $\left\vert
c\right\vert $. Finally, (7) implies (1) by taking $\left\vert b\right\vert
=3$, and (1) implies (7) by a simple induction on $\left\vert b\right\vert $
based on (6).
\end{proof}

% EXACT GALLAI CLIQUES

\section{Exact Gallai cliques\label{Sec:ExGallCliq}}

Now we turn our attention to exact Gallai cliques, i.e., complete cliques in
which every triangle has edges of exactly two colors. Their analysis requires
consideration of the monochromatic subgraphs of a colored graph $G=\left(
V,E,\overline{\bullet}\right)  $. More explicitly, for each color $\alpha$ we
have the (uncolored) graph
\begin{displaymath}
    G\left(  \alpha\right)  \equiv\left(  V,\left\{  e\in E:\overline{e}
    =\alpha\right\}  \right)  .
\end{displaymath}
A subgraph of $G$ is called \emph{monochromatic} if it is a subgraph of
$G\left(  \alpha\right)  $ for some $\alpha$. A \emph{monochrome} of $G$ is a
component of one of the $G\left(  \alpha\right)  $s, i.e., a maximal
connected monochromatic subgraph of $G$, considered as an uncolored graph.

Although the monochromes in Gallai cliques can be as complicated as one wishes
(Proposition \ref{Pr:GallMon}), the monochromes in exact Gallai cliques are
fairly simple (Proposition \ref{Pr:ExGallMon}), and the monochromes of the
irreducible factors of exact Gallai cliques are simple indeed (Definition
\ref{Def:SimFacCliq} and Proposition \ref{Pr:SimFacCliq}).

A subgraph is said to \emph{span} a graph if every vertex of the graph is a
vertex of the subgraph. The following result appeared first in
\cite{Gallai:1967}. The simple proof below was suggested by the referee:

\begin{proposition}
\label{Pr:GallMon}A Gallai clique has a spanning monochrome, and every
connected graph is a spanning monochrome in a Gallai clique.
\end{proposition}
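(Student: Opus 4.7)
The plan for the first assertion is to reduce it to a folklore lemma about $2$-colorings of complete graphs, via Theorem~\ref{Thm:H2C=Gall}. If the Gallai clique $a$ is a singleton the claim is trivial, so suppose $|a|\geq 2$. Theorem~\ref{Thm:H2C=Gall} then hands us a nontrivial homogeneous $\Delta$-partition $a_1,\dots,a_k$ with $|\Delta|\leq 2$, say $\Delta\subseteq\{\alpha,\beta\}$. I would form the ``quotient'' complete graph on the vertex set $\{a_1,\dots,a_k\}$, coloring each edge $a_ia_j$ by the unique color common to every edge of $a_ia_j$. The key ingredient I would now invoke is the classical observation that in any $2$-coloring of a complete graph, at least one of the two color classes is a connected spanning subgraph; say this is the $\alpha$-class of the quotient. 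Homogeneity ensures that whenever $a_i$ and $a_j$ are joined by $\alpha$ in the quotient, \emph{every} edge of $a_ia_j$ carries color $\alpha$, so any two vertices of $a$ can be linked by a path of $\alpha$-edges: lift a quotient path to $a$, routing through an arbitrary vertex in each intermediate part. Hence $a$ has a spanning $\alpha$-monochrome.

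For the converse, the plan is a direct two-color construction. Given a connected graph $H$, form the colored complete graph $K$ on the vertex set $V_H$ with palette $\{\alpha,\beta\}$ defined by $\overline{uv}=\alpha$ if $uv\in E_H$ and $\overline{uv}=\beta$ otherwise. Because only two colors appear overall, no triangle of $K$ can have three distinct edge colors, so $K$ is a Gallai clique. By construction the edges of $K(\alpha)$ are precisely the edges of $H$, and since $H$ is connected and its vertex set is already $V_K$, it appears as a spanning monochrome of $K$.

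The only non-routine step is the folklore lemma on $2$-colorings of complete graphs, which I regard as the main obstacle worth flagging --- although it too has a one-line argument: if the $\alpha$-subgraph of $K_k$ is disconnected, then some nontrivial bipartition of its vertex set has all its crossing edges colored $\beta$, and this alone makes the $\beta$-subgraph connected. Everything else is routine bookkeeping atop the structural work done by Theorem~\ref{Thm:H2C=Gall}.
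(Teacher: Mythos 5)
Your argument is correct, but for the first assertion it takes a genuinely different route from the paper. The paper proves the existence of a spanning monochrome directly and self-containedly: it takes a monochrome $M$ with the largest number of vertices, supposes some vertex $v$ lies outside $M$, and derives a contradiction by walking along a blue path $u_1,\dots,u_n$ inside $M$ and using the Gallai property at each triangle $vu_iu_{i+1}$ to force all the edges $vu_i$ to carry a single non-blue color --- contradicting the assumption that $vu_1$ and $vu_n$ have different colors. You instead invoke Theorem \ref{Thm:H2C=Gall} to pass to the quotient complete graph on the parts of a nontrivial homogeneous $2$-partition, apply the classical fact that one color class of any $2$-colored complete graph is connected and spanning, and lift a quotient path back to $a$ via homogeneity; the lift is sound (two vertices in the same part are joined through a neighboring part, since the partition is nontrivial). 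What each approach buys: the paper's argument is elementary and independent of Theorem \ref{Thm:H2C=Gall}, which is stated in the paper only by citation --- a point worth keeping in mind, since in some treatments the spanning-monochrome result is itself an ingredient in proving the partition theorem; your argument is shorter modulo that theorem and makes the structural mechanism (quotient plus the $2$-coloring connectivity lemma) transparent. Since Theorem \ref{Thm:H2C=Gall} is established before Proposition \ref{Pr:GallMon}, there is no circularity in your use of it. For the second assertion, your two-color construction ($\alpha$ on the edges of $H$, $\beta$ on the non-edges) is exactly what is needed and is in fact more explicit than the paper, whose printed proof addresses only the first assertion.
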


\begin{proof}
Let $a$ be a Gallai clique, and let $M$ be a monochrome of $a$ with largest
number of vertices. We claim that $M$ spans $a$. Suppose that this is not the
case, and let $v$ be a vertex outside of $M$. Consider the star $S$ centered at
$v$ with leaves consisting of all vertices of $M$. We can assume that the edges
of $M$ are colored blue in $a$. If any of the edges of $S$ are colored blue, we
obtain a monochrome with more vertices than $M$, a contradiction. If $S$ is
monochromatic, we reach the same contradiction. Hence, without loss of
generality, there are vertices $u$, $w$ of $M$ such that $vu$ is red and $vw$
is green. Since $M$ is connected, there is a path $u_1=u$, $u_2$, $\dots$,
$u_n=w$ in $M$ such that $u_iu_{i+1}$ is blue for every $1\le i<n$. Since
$vu_1$ is red, $u_1u_2$ is blue, and $vu_2$ is not blue, $vu_2$ must be red,
else $a$ is not Gallai. Proceeding in this fashion, we conclude that $vu_n$
must be red, a contradiction.
\end{proof}

\begin{corollary}
A clique is Gallai iff every subclique has a spanning monochrome.
\end{corollary}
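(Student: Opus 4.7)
The plan is to prove both directions directly, using Proposition \ref{Pr:GallMon} for one implication and the definition of ``colorful triangle'' for the other. The statement is really just a compact repackaging of the proposition together with the hereditary nature of the Gallai property.

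For the forward direction, suppose $a$ is Gallai. I would first observe that the Gallai property is hereditary: if $a$ has no colorful triangle then neither does any subclique $b\subseteq a$, since any triangle in $b$ is a triangle in $a$. Applying Proposition \ref{Pr:GallMon} to each subclique $b$ then yields that $b$ has a spanning monochrome.

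For the converse, I would argue by contrapositive. Suppose $a$ is not Gallai; then $a$ contains a colorful triangle on some vertices $u,v,w$. Consider the subclique $b$ on $\{u,v,w\}$. Its three edges carry three pairwise distinct colors, so for each color $\alpha$ the subgraph $b(\alpha)$ consists of at most one edge, and hence of at most two vertices. In particular, no monochrome of $b$ spans $b$, contradicting the hypothesis that every subclique admits a spanning monochrome.

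No step here looks like a genuine obstacle — the only substantive input is Proposition \ref{Pr:GallMon}, and the rest is essentially unpacking definitions. The one thing worth being careful about is the definition of ``monochrome'' (a \emph{component} of some $G(\alpha)$, not merely a monochromatic subgraph), but that does not affect the argument: a single edge in a colorful triangle is still just an edge, whose vertex set has size two, so it cannot span a three-vertex clique.
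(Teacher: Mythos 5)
Your proof is correct and follows essentially the same route as the paper: the paper's one-line proof (``A triangle is a subclique'') is just a compressed version of your argument, with the forward direction coming from Proposition \ref{Pr:GallMon} plus the hereditary nature of the Gallai property, and the converse from the observation that a colorful triangle is a subclique with no spanning monochrome. Your care about ``monochrome'' meaning a \emph{component} of $G(\alpha)$ is well placed and correctly resolved.
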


\begin{proof}
A triangle is a subclique.
\end{proof}

We will need to refer to several specific uncolored graphs.

% FIGURE (THE A GRAPH)
\setlength{\unitlength}{0.7mm}
\begin{figure}[th]
\begin{center}
\input{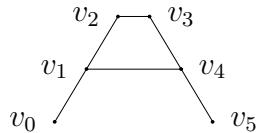}
\end{center}
\caption{The graph $A$.} \label{Fg:GraphA}
\end{figure}

\begin{notation}
\label{Note:SpGr}The $k$\emph{-path} is
\begin{displaymath}
    P_{k}\equiv\left(  \left\{  v_{i}:0\leq i\leq k\right\}  ,\left\{
    v_{i}v_{i+1}:0\leq i<k\right\}  \right)  ,
\end{displaymath}
and the $k$\emph{-cycle} is
\begin{displaymath}
    C_{k}\equiv\left(  \left\{  v_{i}:1\leq i\leq k\right\}  ,\left\{
    v_{i}v_{i+1}:1\leq i<k\right\}  \cup\left\{  v_{k}v_{1}\right\}  \right)  .
\end{displaymath}
We introduce a special graph which will play a role in Section
$\ref{Sec:HomDual}$:
\begin{displaymath}
    A\equiv\left(  \left\{  v_{i}:0\leq i\leq5\right\}  ,\left\{
    v_{0}v_{1},v_{1}v_{2},v_{1}v_{4},v_{2}v_{3},v_{3}v_{4},v_{4}v_{5}\right\}
    \right)  .
\end{displaymath}
See Figure $\ref{Fg:GraphA}$.
\end{notation}

% FIGURE (SIMPLE GALLAI CLIQUES)
\setlength{\unitlength}{1.2mm}
\begin{figure}[th]
\begin{center}
\input{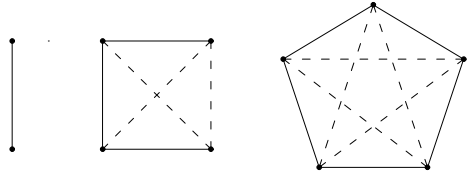}
\end{center}
\caption{Simple cliques.} \label{Fg:SimpleGallai}
\end{figure}

\begin{definition}
\label{Def:SimFacCliq}We say that a clique $a$ in a colored graph is
\emph{simple} if it is complete, and if either

\begin{enumerate}
\item $\left\vert a\right\vert =2$, or

\item $\left\vert a\right\vert =4$, and $a$ has two monochromes isomorphic to
$P_{3}$, or

\item $\left\vert a\right\vert =5$, and $a$ has two monochromes isomorphic to
$C_{5}$.
\end{enumerate}
\end{definition}

The three simple cliques are depicted in Figure \ref{Fg:SimpleGallai}.

For the sake of concise exposition in what follows, we shorten the phrase ``the
triangle with vertices $u_{0}$, $u_{1}$, and $u_{2}$'' to ``the triangle
$u_{0}u_{1}u_{2}$.''

\begin{proposition}
\label{Pr:SimFacCliq}A clique is simple iff it is a nonsingleton irreducible
Gallai $2$-clique.
\end{proposition}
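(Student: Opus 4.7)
The proof splits into two directions, both via direct case analysis.

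$(\Leftarrow)$ I would verify that each simple clique is a nonsingleton irreducible Gallai $2$-clique. In each of the three cases of Definition \ref{Def:SimFacCliq} the clique is complete and uses at most two colors, so Gallai-ness follows automatically (a colorful triangle requires three distinct colors). The nontrivial point is irreducibility. Any homogeneous partition of $a$ with a part $M$ of size at least two forces $M$ to be a \emph{module}---a proper subset with $|M|\geq 2$ such that every external vertex sees $M$ through a single color---so it suffices to rule out non-trivial modules. For $|a|=4$ this is a finite check of the $2$- and $3$-element subsets; for $|a|=5$ the dihedral symmetry of the $C_5$-plus-pentagram configuration reduces the work to a few orbit representatives.

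$(\Rightarrow)$ Let $a$ be a nonsingleton irreducible Gallai $2$-clique of order $n$. If $a$ uses at most one color, then for $n\geq 3$ the partition $\bigl\{\{v_1,v_2\},\{v_3\},\dots,\{v_n\}\bigr\}$ is nontrivial and homogeneous, contradicting irreducibility; so $n=2$ and $a$ is of type~(1). Otherwise exactly two colors $R$ and $B$ appear. If either of the monochromatic subgraphs $G_R,G_B$ were disconnected on $V(a)$, its components would form a nontrivial homogeneous partition with only the opposite color appearing between parts, again contradicting irreducibility; so $G_R$ and $G_B$ are both connected spanning subgraphs of $K_n$. In the exact-Gallai setting of this section every triangle uses both colors, so $G_R$ and $G_B$ are triangle-free; Ramsey's theorem $R(3,3)=6$ then forces $n\leq 5$.

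A short enumeration on $n\in\{3,4,5\}$ finishes the proof. For $n=3$, the endpoints of the minority-color edge form a two-element module, contradicting irreducibility. For $n=4$, among the connected triangle-free spanning subgraphs of $K_4$ only $P_3$ has a connected triangle-free complement (the complement of $K_{1,3}$ contains the triangle on its leaves, and the complement of $C_4$ is disconnected), giving case~(2). For $n=5$, Tur\'an caps each color class at six edges; checking the triangle-free connected spanning candidates on five vertices---the three trees $P_4$, the Y-tree, and $K_{1,4}$, the five-cycle $C_5$, the graph $C_4$-with-pendant, and $K_{2,3}$---one finds that only $C_5$ has a triangle-free complement (the pentagram, another $C_5$), yielding case~(3). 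The main obstacle is this final enumeration: each non-$C_5$ candidate must be individually eliminated by exhibiting a monochromatic triangle in its complement, and the $(C_5,\text{pentagram})$ configuration must then be verified to be module-free.
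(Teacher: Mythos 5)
Your overall strategy coincides with the paper's: bound the number of vertices by Ramsey's theorem, then classify the cases $n=2,3,4,5$. The paper does the classification by a direct edge-by-edge coloring argument, whereas you enumerate the connected triangle-free graphs on $4$ and $5$ vertices whose complements are also triangle-free; both routes work, and your reduction of irreducibility to the absence of nontrivial modules is sound. You also spell out the $(\Leftarrow)$ direction, which the paper dismisses with ``it is easy to see that $a$ is simple.''

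The genuine problem is the sentence ``In the exact-Gallai setting of this section every triangle uses both colors, so $G_R$ and $G_B$ are triangle-free.'' Exactness is \emph{not} a hypothesis of the proposition: the clique is only assumed to be an irreducible Gallai $2$-clique, and since any $2$-colored complete clique is automatically Gallai, the word ``Gallai'' contributes nothing here. You would therefore have to derive the absence of monochromatic triangles from irreducibility alone, and that cannot be done: color the edges of a bull (triangle $\{1,2,3\}$ with pendant $4$ on $1$ and pendant $5$ on $2$) red and the remaining five edges of $K_5$ blue. Both color classes are bulls, one checks that no subset of size $2$, $3$, or $4$ is a module (so there is no nontrivial homogeneous partition and the clique is irreducible), yet the red triangle $\{1,2,3\}$ is monochromatic and the clique is not simple. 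So the step as justified fails, and with it the appeal to $R(3,3)=6$ and the restriction of your $n=4,5$ enumerations to triangle-free color classes. In fairness, the paper's own proof makes the same silent assumption --- its Ramsey step and its $n=5$ case both presuppose that monochromatic triangles do not occur, which is exactly the extra exactness hypothesis under which the proposition is actually applied in Theorem \ref{Thm:ExGall} --- but if you invoke exactness you must either add it to the hypotheses or prove that irreducibility implies it, and the bull shows the latter is false. (For $n=4$ only, irreducibility genuinely does exclude a monochromatic triangle: two of the three edges from the fourth vertex to the triangle agree in color, producing a $2$-element module.)
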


\begin{proof}
\ Let $a$ be a nonsingleton irreducible Gallai $2$-clique. $a$ cannot have six
or more elements, for the most basic form of Ramsey's Theorem
(\cite{Ramsey:1930}) asserts that a $2$-clique with six vertices has a
monochromatic triangle. $a$ cannot have three elements, for identification of
the two vertices connected by the edge with minority color constitutes a
nontrivial homogenous partition.

% FIGURE (IRREDUCIBLE CLIQUES)
\setlength{\unitlength}{1.2mm}
\begin{figure}[th]
\begin{center}
\input{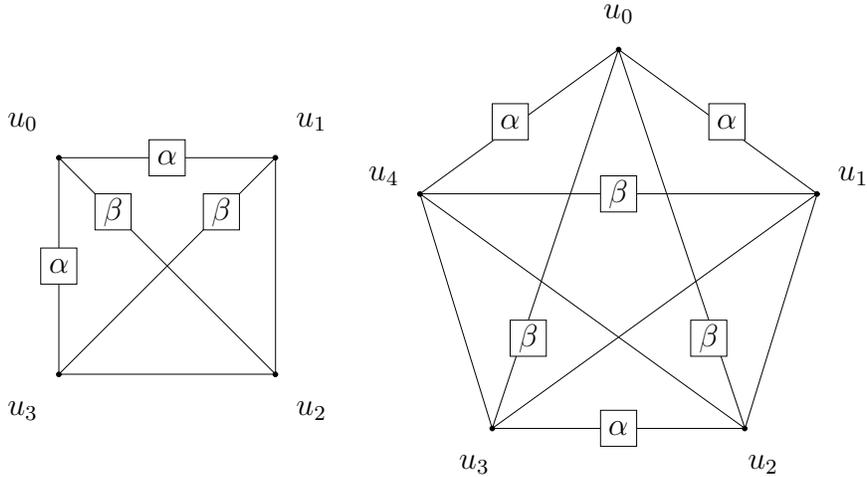}
\end{center}
\caption{Proving Proposition \ref{Pr:SimFacCliq}.}\label{Fg:45}
\end{figure}

Let $a=\{u_{0},u_{1},$ $u_{2},$ $u_{3}\}$. Without loss of generality
$\overline{u_{0}u_{1}}=\overline{u_{0}u_{3}}=\alpha$. If
$\overline{u_{0}u_{2}}=\alpha$ then $\left\{  u_{0},\left\{
u_{1},u_{2},u_{3}\right\} \right\}  $ is a nontrivial homogeneous partition,
hence $\overline{u_{0}u_{2}}=\beta\neq\alpha$. The triangle $u_{0}u_{1}u_{3}$
cannot be monochromatic, hence $\overline{u_{1}u_{3}}=\beta$. We are now in the
situation depicted in Figure \ref{Fg:45}, and it is easy to see that $a$ is
simple.

Let $a=\{u_{0},u_{1},$ $u_{2},$ $u_{3},$ $u_{4}\}$. Without loss of generality
$\overline{u_{0}u_{1}}=\overline{u_{0}u_{4}}=\alpha$, and $\overline
{u_{0}u_{3}}=\overline{u_{1}u_{4}}=\beta$. If $\overline{u_{0}u_{2}}=\alpha$
then $\overline{u_{1}u_{2}}=\beta$, but in that case any color assigned to
$u_{2}u_{4}$ would result in a colorful triangle. Thus
$\overline{u_{0}u_{2}}=\beta$, $\overline{u_{2}u_{3}}=\alpha$, and we are in
the situation depicted in Figure \ref{Fg:45}. It is then easy to see that $a$
is simple.
\end{proof}

\begin{theorem}
\label{Thm:ExGall}A clique is exact Gallai iff it is a hereditarily
irreducible $2$-clique with simple factors, such that higher factors use
different colors than lower factors.
\end{theorem}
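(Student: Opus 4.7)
The plan is to leverage Theorem \ref{4} to place any exact Gallai clique inside the tree-$2$-clique framework, then use Proposition \ref{Pr:SimFacCliq} to identify the irreducible factors as simple, and finally reduce the exactness condition to a color-disjointness condition on factors by a short LCA-based triangle analysis.

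For the forward direction I would start from an exact Gallai $a$. Since exactness entails being Gallai, Theorem \ref{4} yields a representation $a\cong K(T)$ as a hereditarily irreducible $2$-clique; by collapsing any non-leaf of $T$ with a unique child, one may assume every non-leaf has at least two children. Each factor is then a nonsingleton irreducible clique carrying at most two colors, hence Gallai, hence simple by Proposition \ref{Pr:SimFacCliq}. To derive the color-disjointness condition, suppose toward contradiction that $t_1<t_2$ in $T\smallsetminus V(T)$ have factors sharing a color $\alpha$. I would pick children $r_1\neq r_2$ of $t_1$ with $\widehat{r_1r_2}=\alpha$, children $s_1\neq s_2$ of $t_2$ with $\widehat{s_1s_2}=\alpha$, and (WLOG) $t_1\leq s_1$. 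Choosing leaves $u_1\leq r_1$, $u_2\leq r_2$, $u_3\leq s_2$, Proposition \ref{5} gives $\overline{u_1u_2}=\widehat{r_1r_2}=\alpha$ and $\overline{u_1u_3}=\overline{u_2u_3}=\widehat{s_1s_2}=\alpha$, a monochromatic triangle contradicting exactness.

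For the converse I would assume $a$ is a hereditarily irreducible $2$-clique with simple factors whose higher factors use colors disjoint from those of lower factors. Theorem \ref{4} already gives that $a$ is Gallai, so every triangle uses at most two colors; it remains to rule out monochromatic triangles. Fixing a triangle $u_1u_2u_3$, I would consider its three pairwise least common ancestors in $T$: in any tree, either all three coincide at a single node $t$, or two of them agree at a node $t_*$ strictly above the third. In the first case $u_1,u_2,u_3$ lie under three distinct children of $t$, so the triangle embeds into the factor at $t$; a quick inspection of the three pictures in Figure \ref{Fg:SimpleGallai} confirms that no simple clique contains a monochromatic triangle (each monochrome is a $P_3$ or a $C_5$ or a single edge), so $u_1u_2u_3$ cannot be monochromatic. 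In the second case, the two leaves sharing the lower LCA $t_{12}$ (say $u_1,u_2$) give an edge whose color comes from the factor at $t_{12}$, whereas $\overline{u_1u_3}=\overline{u_2u_3}$ receive colors from the factor at $t_*>t_{12}$; by the disjointness hypothesis these two colors differ, and again the triangle is not monochromatic. The main obstacle is purely bookkeeping---keeping the tree-theoretic identifications of sibling ancestors, LCAs, and factor color sets straight---but no new structural idea beyond those already present in Section \ref{Sec:GallCliq} is required.
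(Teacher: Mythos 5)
Your argument is essentially the paper's own proof written out in full: the paper disposes of the converse by saying the induction of Proposition \ref{2} ``can be readily modified,'' and your LCA case analysis is exactly that modification unrolled, while the forward direction in both cases rests on Theorem \ref{4}, Proposition \ref{Pr:SimFacCliq}, and the observation that a repeated color between comparable factors forces a monochromatic triangle. One step needs a one-line repair: the assertion ``(WLOG) $t_1\leq s_1$'' is not a free relabeling, because the unique child of $t_2$ lying above $t_1$ need not be an endpoint of the particular $\alpha$-colored edge $s_1s_2$ you happened to choose in the factor at $t_2$. The fix is already in your hands: you establish simplicity of the factors \emph{before} this step, and in each of the three simple cliques every vertex is incident to an edge of each of the (at most two) colors occurring there --- the monochromes are a single edge, two spanning copies of $P_3$, or two spanning copies of $C_5$ --- so you may choose the $\alpha$-edge of the factor at $t_2$ so that one endpoint is the child of $t_2$ above $t_1$, and then run your triangle construction verbatim. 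With that adjustment the proof is complete and matches the paper's intended argument.
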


\begin{proof}
Let $a$ by a hereditarily irreducible $2$-clique with simple factors. As long
as higher factors use different colors than lower factors, the argument given
in Proposition \ref{2} can be readily modified to show that every triangle has
edges of exactly two colors. Now consider an exact Gallai clique in a colored
graph. Apply Theorem \ref{4} to express it as a tree $2$-clique with
irreducible factors. Then these factors are exact by Proposition
\ref{Pr:SimFacCliq}, and clearly higher factors use differed colors than lower
factors, since otherwise a monochromatic triangle exists.
\end{proof}

% FULL HOMOMORPHISMS

\section{Full homomorphisms\label{Sec:FullHom}}

A \emph{full homomorphism} (\cite{HellNesetril:2004}) is a map $f:G\rightarrow
H$ between (uncolored) graphs such that for all $v_{i}\in V_{G}$,
\begin{displaymath}
    v_{1}v_{2}\in E_{G}\Longleftrightarrow f\left(  v_{1}\right)  f\left(
    v_{2}\right)  \in E_{H}.
\end{displaymath}
Note that the identity map is a full homomorphism, and that the composition of
full homomorphisms is itself a full homomorphism. Thus, graphs with full
homomorphisms constitute a category. For our purposes, however, we need only a
few simple properties of these maps, given in the following lemmas. In theses
lemmas and in what follows, we reserve the term embedding for the identity map
on an induced subgraph.

\begin{lemma}
\label{Lem:FullHom1}An embedding is a full homomorphism, and every full
homomorphism factors into a full surjection followed by an embedding. That is,
each full homomorphism $f:G\rightarrow H$ factors as $f=jf^{\prime}$,
\begin{displaymath}
    G\overset{f^{\prime}}{\rightarrow}f\left(  G\right)  \overset{j=\subseteq
    }{\rightarrow}H,
\end{displaymath}
where $f^{\prime}$ is the map $v\longmapsto f\left(  v\right)  $ onto the
induced subgraph with vertex set $f\left(  G\right)  $, and $j$ is the
embedding of this subgraph into $H$.
\end{lemma}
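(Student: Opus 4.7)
The plan is to verify both assertions by direct unpacking of definitions, with no substantive obstacle beyond careful bookkeeping about induced subgraphs.

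For the first assertion, let $H'$ be an induced subgraph of $H$ and $j:H'\hookrightarrow H$ the identity embedding. By the definition of induced subgraph, for any two vertices $v_1,v_2\in V_{H'}$ we have $v_1v_2\in E_{H'}$ if and only if $v_1v_2\in E_H$, which is exactly the full homomorphism condition $v_1v_2\in E_{H'}\Leftrightarrow j(v_1)j(v_2)\in E_H$.

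For the second assertion, let $f:G\to H$ be a full homomorphism and let $H'$ denote the induced subgraph of $H$ on the vertex set $f(V_G)$. Define $f':G\to H'$ by $v\mapsto f(v)$; this map is surjective by construction of $H'$, and $j:H'\hookrightarrow H$ is an embedding which, by the first part, is a full homomorphism. For fullness of $f'$, fix $v_1,v_2\in V_G$ and chain the equivalences
\begin{displaymath}
v_1v_2\in E_G \;\Longleftrightarrow\; f(v_1)f(v_2)\in E_H \;\Longleftrightarrow\; f(v_1)f(v_2)\in E_{H'},
\end{displaymath}
where the first equivalence uses that $f$ is full and the second uses that $H'$ is induced in $H$ with both $f(v_1),f(v_2)\in V_{H'}$. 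Finally, $jf'=f$ holds because $j$ is the identity on $V_{H'}$, establishing the factorization $f=jf'$.

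The only thing that requires any care is ensuring that $f(G)$ is taken to be the \emph{induced} subgraph on $f(V_G)$ rather than an arbitrary subgraph; this is precisely what makes the middle equivalence above go through and is what allows $f'$ to inherit fullness from $f$.
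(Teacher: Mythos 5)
Your proof is correct; the paper states this lemma without proof, treating it as a routine consequence of the definitions, and your direct verification supplies exactly the intended bookkeeping. In particular, you correctly isolate the one point that matters — that $f(G)$ must be the \emph{induced} subgraph on $f(V_G)$ so that $f'$ inherits fullness — so there is nothing to add.
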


For a graph $G=(V,E)$, we can view $E$ as a relation on $V\times V$, and
therefore write $uEv$ in place of $uv\in E$, and $uE=\{v;\;uEv\}$.

A graph $G=\left(  V,E\right)$ is said to be \emph{reduced} if for all
$v_{i}\in V$,
\begin{displaymath}
    v_{1}E=v_{2}E\Longrightarrow v_{1}=v_{2}.
\end{displaymath}

Lemmas \ref{Lm:Folklore1}--\ref{Lm:Folklore2} are folklore:

\begin{lemma}\label{Lm:Folklore1}
\label{Lem:FullHom2}A full homomorphism out of a reduced graph is injective,
hence an embedding.
\end{lemma}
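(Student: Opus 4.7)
The plan is to show injectivity by a direct computation using the reducedness of $G$ and the biconditional that defines a full homomorphism, then invoke Lemma \ref{Lem:FullHom1} to get the embedding conclusion.

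Concretely, suppose $f\colon G\to H$ is a full homomorphism with $G=(V_G,E_G)$ reduced, and suppose $f(v_1)=f(v_2)$ for some $v_1,v_2\in V_G$. I want to establish $v_1E_G=v_2E_G$ and then invoke the reducedness hypothesis. For any $u\in V_G\setminus\{v_1,v_2\}$, the full homomorphism property gives
\begin{displaymath}
    v_1E_Gu \iff f(v_1)E_Hf(u) \iff f(v_2)E_Hf(u) \iff v_2E_Gu,
\end{displaymath}
where the middle equivalence uses $f(v_1)=f(v_2)$. The two remaining cases, $u=v_1$ and $u=v_2$, must be handled by appealing to the standing assumption that graphs are loopless. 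For $u=v_1$, we have $v_1E_Gv_1$ false by the no-loops assumption, and on the other side $v_2E_Gv_1 \iff f(v_2)E_Hf(v_1) \iff f(v_1)E_Hf(v_1)$, which is again false because $H$ has no loops; the case $u=v_2$ is symmetric. Thus $v_1E_G=v_2E_G$, and the reducedness of $G$ forces $v_1=v_2$. Hence $f$ is injective.

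For the second assertion, apply Lemma \ref{Lem:FullHom1} to factor $f=jf'$ with $f'\colon G\to f(G)$ a full surjection and $j\colon f(G)\hookrightarrow H$ the embedding of the induced subgraph $f(G)$ into $H$. Injectivity of $f$ makes $f'$ a bijective full homomorphism onto the induced subgraph on $f(G)$, i.e.\ an isomorphism; after identifying $G$ with $f(G)$ along $f'$, the map $f$ becomes the embedding $j$.

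The only subtle step is the handling of $u\in\{v_1,v_2\}$ in the verification that $v_1E_G=v_2E_G$, since the full homomorphism biconditional is about edges and tells us nothing directly about the loops that would formally appear when $u$ coincides with $v_1$ or $v_2$; the no-loops convention announced in the ``Ground clearing'' section is what bridges the gap. Everything else is bookkeeping on top of Lemma \ref{Lem:FullHom1}.
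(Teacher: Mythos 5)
Your proof is correct and follows essentially the same route as the paper's: assume $f(v_1)=f(v_2)$, use the full-homomorphism biconditional to conclude $v_1E_G=v_2E_G$, and invoke reducedness. The only difference is that you explicitly dispatch the corner cases $u\in\{v_1,v_2\}$ via the no-loops convention, a detail the paper's one-line chain of equivalences silently absorbs; this is a harmless refinement, not a different argument.
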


\begin{proof}
Let $f:G\rightarrow H$ be a full homomorphism, let $G$ be reduced, and let
$v_{i}\in V$ satisfy $f\left(  v_{1}\right)  =f\left(  v_{2}\right)  $. Since
for any $v\in V$ we have
\begin{displaymath}
    v_{1}v\in E_{G}\Longleftrightarrow f\left(  v_{1}\right)  f\left(  v\right)
    \in E_{H}\Longleftrightarrow f\left(  v_{2}\right)  f\left(  v\right)
    \Longleftrightarrow v_{2}v\in E_{G},
\end{displaymath}
it is clear that $v_{1}E_{G}=v_{2}E_{G}$. Because $G$ is reduced,
$v_{1}=v_{2}$.
\end{proof}

We wish to show that every graph has a reduced form. For that purpose, we fix a
graph $G=\left(  V_{G},E_{G}\right)  $ for the next few lemmas, and define
$\widehat{G}\equiv(V_{\widehat{G}},E_{\widehat{G}})$ by setting
\begin{displaymath}
    V_{\widehat{G}}\equiv\left\{  vE:v\in V_{G}\right\}  ,\;E_{\widehat{G}}
    \equiv\left\{  \left(  v_{1}E\right)  \left(  v_{2}E\right)  :v_{1}v_{2}\in
    E_{G}\right\}  .
\end{displaymath}
We first show that this definition makes sense.

\begin{lemma}
If $u_{1}E=u_{2}E$ and $v_{1}E=v_{2}E$ then
\begin{displaymath}
    u_{1}Ev_{1}\Longleftrightarrow u_{2}Ev_{2}.
\end{displaymath}

\end{lemma}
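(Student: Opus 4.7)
The plan is to push an edge through the two given equalities using only the symmetry of $E$. Suppose $u_1Ev_1$. Reading this as $v_1\in u_1E$ and invoking $u_1E=u_2E$ gives $v_1\in u_2E$, that is, $u_2Ev_1$. Now appeal to symmetry of $E$ to restate this as $u_2\in v_1E$, and apply $v_1E=v_2E$ to obtain $u_2\in v_2E$, i.e., $u_2Ev_2$. The reverse implication is obtained by swapping the subscripts $1\leftrightarrow 2$ throughout, since both hypotheses are symmetric in those subscripts.

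The one thing that must be checked is compatibility with the standing no-loops convention: the chain above ends at $u_2Ev_2$, which in a loop-free graph implicitly requires $u_2\ne v_2$. I plan to dispose of the potential degenerate case directly. If $u_2=v_2$, then $u_2E=v_2E$, so the two hypotheses give $u_1E=u_2E=v_2E=v_1E$; combined with $u_1Ev_1$, i.e., $v_1\in u_1E$, this yields $v_1\in v_1E$, a loop at $v_1$, contradicting irreflexivity. Hence this case is vacuous and the three-step propagation goes through unhindered.

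I do not anticipate any genuine obstacle: the statement is exactly the well-definedness of the edge relation of $\widehat G$ on equivalence classes of vertices with identical neighborhoods, and it reduces to three substitutions of equal sets into the symmetric relation $E$.
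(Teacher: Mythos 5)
Your argument is correct and is essentially the paper's own proof: the same three-step chain $u_1Ev_1\Leftrightarrow v_1\in u_1E=u_2E\Leftrightarrow u_2\in v_1E=v_2E\Leftrightarrow u_2Ev_2$, using only symmetry of $E$ and substitution of equal neighborhood sets. Your extra paragraph on the case $u_2=v_2$ is harmless but not needed, since the chain of equivalences already forces both sides to be false in that degenerate situation.
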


\begin{proof}
Since
\begin{displaymath}
    u_{1}Ev_{1}\Longleftrightarrow v_{1}\in u_{1}E=u_{2}E\Longleftrightarrow
    u_{2}\in v_{1}E=v_{2}E\Longleftrightarrow u_{2}Ev_{2},
\end{displaymath}
the result is clear.
\end{proof}

We define the canonical map $r_{G}:G\rightarrow\widehat{G}$ by the rule
$v\longmapsto vE$.

\begin{lemma}
\label{Lem:Ghat2}$\widehat{G}$ is reduced and $r_{G}$ is a full surjection.
Moreover, any function $h_{G}:\widehat{G}\rightarrow G$ which satisfies
$h_{G}\left( vE\right)  \in vE$ for all $v\in V_{G}$ constitutes a full
homomorphism such that $r_{G}h_{G}$ is the identity map on $\widehat{G}$.
\end{lemma}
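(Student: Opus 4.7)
The plan is to verify the three assertions in sequence, with each one leaning on the preceding well-definedness lemma. To see that $\widehat{G}$ is reduced, observe that for any $v$, $(vE)E_{\widehat{G}} = \{wE : vw \in E_G\}$. If $(v_1E)E_{\widehat{G}} = (v_2E)E_{\widehat{G}}$, then for any $w \in v_1E$ the class $wE$ lies in $(v_2E)E_{\widehat{G}}$, so there is some $w'$ with $w'E = wE$ and $v_2 w' \in E_G$. The preceding lemma converts this into $v_2 w \in E_G$, i.e.\ $w \in v_2 E$, and by symmetry $v_1 E = v_2 E$.

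The surjectivity of $r_G$ is immediate from the definition of $V_{\widehat{G}}$. The forward direction of fullness is the defining clause of $E_{\widehat{G}}$. Its converse is the only place where something nontrivial happens: if $(v_1E)(v_2E) \in E_{\widehat{G}}$, this merely exhibits some $u_1 u_2 \in E_G$ with $u_i E = v_i E$, and the preceding lemma is precisely what pushes that edge back to $v_1 v_2 \in E_G$.

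Finally, for a section $h_G$, each $h_G(vE)$ shares its neighborhood with $v$, so $r_G h_G(vE) = h_G(vE)E = vE$ and $r_G h_G$ is the identity on $\widehat{G}$. Writing $u_i = h_G(v_iE)$, the fullness of $h_G$ falls out of the chain
\begin{displaymath}
u_1 u_2 \in E_G \iff v_1 v_2 \in E_G \iff (v_1 E)(v_2 E) \in E_{\widehat{G}},
\end{displaymath}
in which the first equivalence is the well-definedness lemma (applied to $u_iE = v_iE$) and the second is the fullness of $r_G$ already established. No step is a genuine obstacle; the whole argument is little more than a systematic invocation of the well-definedness lemma, which is exactly what makes the definition of $\widehat{G}$ coherent in the first place.
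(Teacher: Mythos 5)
The paper states this lemma without proof (it is grouped with the ``folklore'' lemmas), and your verification is correct and is exactly the routine argument intended: each of the three claims reduces, as you show, to the definition of $E_{\widehat{G}}$ together with the preceding well-definedness lemma. One remark: you have (rightly) read the hypothesis $h_{G}\left(vE\right)\in vE$ as saying that $h_{G}\left(vE\right)$ is a vertex $u$ with $uE=vE$, i.e.\ a representative of the fibre $r_{G}^{-1}\left(vE\right)$; under the literal reading of $vE$ as the neighborhood of $v$ the claim that $r_{G}h_{G}$ is the identity already fails for a single edge, so your interpretation is the only one under which the statement is true, and your proof of that statement is sound.
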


The significance of $\widehat{G}$ is that it is the smallest full quotient of
$G$.

\begin{lemma}\label{Lm:Folklore2}
\label{Lem:Ghat3}$r_{G}$ is the smallest full surjection out of $G$. That is,
if $f:G\rightarrow H$ is a full surjection then there is a unique full
surjection $g:H\rightarrow\widehat{G}$ such that $gf=r_{G}$.
\end{lemma}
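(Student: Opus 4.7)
The plan is to construct $g$ directly from the factorization forced by the equation $gf = r_G$, and then verify it has the required properties.

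First I would \emph{define} $g$ on vertices as follows. Since $f$ is surjective, every $h \in V_H$ admits some $v \in V_G$ with $f(v) = h$; set $g(h) \equiv vE_G$. To see that this does not depend on the choice of $v$, suppose $f(v_1) = f(v_2)$. Then for every $w \in V_G$,
\begin{displaymath}
    v_1 E_G w \Longleftrightarrow f(v_1)f(w) \in E_H \Longleftrightarrow f(v_2)f(w) \in E_H \Longleftrightarrow v_2 E_G w,
\end{displaymath}
using fullness of $f$ in both directions, so $v_1 E_G = v_2 E_G$ and $g$ is well defined. By construction $g(f(v)) = vE_G = r_G(v)$ for every $v \in V_G$, so $gf = r_G$.

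Next I would verify that $g$ is a full surjection. Surjectivity is immediate: every vertex of $\widehat{G}$ has the form $vE_G$, and $g(f(v)) = vE_G$. For fullness, pick $h_1, h_2 \in V_H$ and choose preimages $v_1, v_2 \in V_G$ under $f$. Then
\begin{displaymath}
    h_1 h_2 \in E_H \Longleftrightarrow f(v_1)f(v_2) \in E_H \Longleftrightarrow v_1 v_2 \in E_G,
\end{displaymath}
the last equivalence by fullness of $f$. By the definition of $E_{\widehat G}$, the condition $v_1 v_2 \in E_G$ immediately gives $(v_1 E_G)(v_2 E_G) \in E_{\widehat G}$; conversely, if $(v_1 E_G)(v_2 E_G) \in E_{\widehat G}$ then some $u_1, u_2$ with $u_i E_G = v_i E_G$ satisfy $u_1 u_2 \in E_G$, and the preceding lemma (the well-definedness lemma for $\widehat G$) then yields $v_1 v_2 \in E_G$. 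Hence $g(h_1)g(h_2) \in E_{\widehat G} \iff h_1 h_2 \in E_H$, as required.

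Finally, uniqueness is automatic: any $g' : H \to \widehat G$ with $g'f = r_G$ must satisfy, for each $h = f(v) \in V_H$, $g'(h) = g'(f(v)) = r_G(v) = vE_G = g(h)$, so $g' = g$ on vertices, and a full homomorphism is determined by its vertex map. I expect no serious obstacle; the one point that deserves care is the ``reflects edges'' half of fullness of $g$, which is where the precise shape of $E_{\widehat G}$ and the earlier well-definedness lemma are used, rather than just fullness of $f$ alone.
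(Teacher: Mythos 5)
Your proposal is correct and follows essentially the same route as the paper: establish that $f(v_1)=f(v_2)$ forces $v_1E_G=v_2E_G$ (using fullness of $f$ in both directions), define $g$ by $g(f(v))\equiv vE_G$, and check the remaining properties. The paper dismisses the verification of surjectivity, fullness, and uniqueness as routine, whereas you spell them out — correctly, including the one point that genuinely needs the earlier well-definedness lemma, namely the edge-reflecting half of fullness of $g$.
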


\begin{proof}
If $f\left(  v_{1}\right)  =f\left(  v_{2}\right)  $ then we claim that
$v_{1}E_{G}=v_{2}E_{G}$. For if $v\in v_{1}E_{G}$ then $f\left(  v_{1}\right)
E_{H}f\left(  v\right)  $, hence $f\left(  v_{2}\right)  E_{H}f\left(
v\right)  $ and $v_{2}E_{G}v$, and conversely. Thus we can define $h$ by
setting $h\left(  f\left(  v\right)  \right)  \equiv vE$. It is routine to
verify that $h$ has the properties claimed for it.
\end{proof}

It follows from Lemma \ref{Lem:Ghat3} that $G$ is reduced iff $r_{G}$ is an
isomorphism. We refer to $\widehat{G}$ as the \emph{reduced form of }$G$, and
we refer to the isomorphism type of $\widehat{G}$ as the \emph{type of }$G$.
Note that if $G$ is connected then so is its type.

Exact Gallai monochromes are characterized by their types.

\begin{proposition}
\label{Pr:ExGallMon}Monochromes of exact Gallai cliques are of type $P_{1}$,
$P_{3}$, or $C_{5}$, and every graph of one of these types appears as a
(spanning) monochrome in an exact Gallai clique.
\end{proposition}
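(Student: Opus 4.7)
My plan is to invoke Theorem \ref{Thm:ExGall}, which realizes an exact Gallai clique $a$ as the leaf graph $K(T)$ of a tree $T$ whose sibling graph $S(T)$ is $2$-colored by simple factors, with comparable factors using disjoint colors.

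For the forward direction, fix a color $\alpha$ and let $M$ be a monochrome of color $\alpha$ in $a$. The first step is to show that $M$ is \emph{localized}: an edge $vw$ of $K(T)$ inherits the color $\widehat{uu'}$, where $u,u'$ are the children of the least common ancestor (LCA) of $v,w$; since higher factors use colors disjoint from those of lower factors, if $\alpha$ is used at an internal node $s$, it is used at no strict ancestor of $s$. It follows that all $\alpha$-edges in $M$ share a common LCA $t$, and $M$ is therefore the blow-up of the $\alpha$-subgraph $F_\alpha$ of the factor at $t$ by the leaf sets $L_i$ of the subtrees rooted at the children of $t$. Because the factor at $t$ is simple, $F_\alpha$ is $P_1$, $P_3$, or $C_5$. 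I would then compute $\widehat{M}$ directly: two leaves inside the same block $L_i$ share identical $\alpha$-neighborhoods (a union of blocks $L_j$ along $F_\alpha$-edges incident to $i$), while leaves in distinct blocks have distinct $\alpha$-neighborhoods since $F_\alpha$ itself is a reduced graph on at least two vertices; hence $\widehat{M}\cong F_\alpha\in\{P_1,P_3,C_5\}$.

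For realizability, given a connected $G$ with $\widehat{G}\in\{P_1,P_3,C_5\}$, let $L_0,\dots,L_k$ (with $k\in\{1,3,4\}$) be the ``same neighborhood'' equivalence classes. Since $v,v'\in L_i$ entails $vE=v'E$ while $v\notin vE$, no edge lies within a single block, so $G$ is exactly the blow-up of $\widehat{G}$ by the sizes $|L_i|\ge 1$. I would build $T$ with a root of $k+1$ children, color the root's sibling graph by the simple factor whose designated color $\alpha$ traces $\widehat{G}$, and attach below each child $t_i$ an arbitrary exact Gallai clique on $|L_i|$ vertices (these exist for every positive size by iterated binary factors), using fresh colors disjoint from $\alpha$ and respecting the comparable-nodes condition. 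Then $K(T)$ is exact Gallai by Theorem \ref{Thm:ExGall}, and its $\alpha$-monochrome is spanning (since $F_\alpha=\widehat{G}$ covers all children of the root) and equal to the stated blow-up, hence isomorphic to $G$.

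The main obstacle is the localization step in the forward direction: I must exclude an $\alpha$-path escaping upward from a node $t$ where $\alpha$ is used to a leaf beneath an incomparable node $t'$ where $\alpha$ is independently used. The comparable-nodes clause of Theorem \ref{Thm:ExGall} is exactly the tool that forbids such cross-edges, since any connecting edge would be colored at a node strictly above both $t$ and $t'$, forcing $\alpha$ to reappear at a higher factor and contradicting the hypothesis. The remainder is routine verification that the blow-ups behave as claimed and that the recursive realizability construction preserves exactness.
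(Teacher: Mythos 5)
Your forward direction is essentially the paper's argument: both proofs use Theorem \ref{Thm:ExGall} to realize the clique as a tree $2$-clique with simple factors, both use the condition that comparable factors have disjoint color sets to localize all $\alpha$-edges of a single monochrome to one factor $b$, and both then read off the answer from $b(\alpha)\in\{P_1,P_3,C_5\}$. The only difference in that half is cosmetic: the paper exhibits the map ``vertex $\mapsto$ its ancestor among the children of $t$'' as a full homomorphism onto $b(\alpha)$ and lets Lemma \ref{Lem:Type} (together with the observation that the reduced induced subgraphs of $C_5$ are again $P_1$, $P_3$, $C_5$) identify the type, whereas you compute $\widehat{M}$ directly from the blow-up structure; both routes are valid, and yours has the minor advantage of pinning down the type as exactly $F_\alpha$ rather than an induced reduced subgraph of it. Where you genuinely add value is the realizability half: the paper's proof says nothing about why every graph of type $P_1$, $P_3$, or $C_5$ occurs as a spanning monochrome, while you supply the explicit construction --- observe that a connected $G$ with reduced form in $\{P_1,P_3,C_5\}$ is the blow-up of $\widehat{G}$ by its neighborhood classes (no edges inside a class since the graphs are loopless), realize $\widehat{G}$ as the $\alpha$-part of the appropriate simple factor at the root, and hang arbitrary exact Gallai cliques with fresh colors below each child. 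That construction is correct and checks the hypotheses of Theorem \ref{Thm:ExGall}, so your write-up is, if anything, more complete than the paper's.
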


\begin{proof}
According to Theorem \ref{Thm:ExGall}, we may think of an exact Gallai clique
as a tree $2$-clique $K\left(  T\right)  $ with simple factors. Let $G=\left(
V,E\right)  $ be a monochrome in $K\left(  T\right)  $, i.e., a component of
$K\left(  T\right)  \left(  \alpha\right)  $ for some color $\alpha$. Now
every edge of $E$ inherits its color from that of an edge connecting a sibling
pair in $S\left(  T\right)  $ as in Proposition \ref{5}, and all these sibling
pairs have a common parent $t$ because higher factors use different colors
than lower factors. Let $b=\left\{  t^{\prime}\in T:t^{\prime}\prec t\right\}
$ be the factor at $t$, and let $H$ be $b\left(  \alpha\right)  $. Let
$f:G\rightarrow H$ be the map which takes each $v\in V$ to its unique ancestor
in $b$. Then $f$ is clearly a full homomorphism, and since $b$ is simple, $H$
is isomorphic to $P_{1}$, $P_{3}$, or $C_{5}$.
\end{proof}

An induced subgraph of a reduced graph need not be reduced. The reduced induced
subgraphs of $C_{5}$ are $P_{1}$, $P_{3}$, and $C_{5}$ (the remaining $P_{2}$
reduces to $P_{1}$), the very graphs used to define simple cliques. This
observation permits a second characterization of exact Gallai types in
Corollary \ref{Cor:GallC5}, a result which uses the following trivial lemma.

\begin{lemma}
\label{Lem:Type}There exists a full homomorphism from $G$ into $H$ iff the
type of $G$ is embedded in the type of $H$.
\end{lemma}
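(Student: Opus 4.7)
The plan is to chain together the machinery developed in Lemmas \ref{Lem:FullHom1}--\ref{Lem:Ghat3}, exploiting the fact that $\widehat{G}$ is both a full quotient of $G$ (via $r_G$) and a full ``subobject'' of $G$ (via any choice function $h_G$). The forward direction is really a statement about full homomorphisms out of reduced graphs being forced to be embeddings, and the backward direction is a pure composition of the structural maps.

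\medskip

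For the forward direction, suppose $f:G\to H$ is a full homomorphism. Fix a function $h_G:\widehat{G}\to G$ as in Lemma \ref{Lem:Ghat2}, so that $h_G$ is a full homomorphism and $r_G\,h_G=\operatorname{id}_{\widehat{G}}$. Consider the composite
\begin{displaymath}
    \widehat{G}\xrightarrow{h_G}G\xrightarrow{f}H\xrightarrow{r_H}\widehat{H}.
\end{displaymath}
Each factor is a full homomorphism, hence so is the composite. Its domain $\widehat{G}$ is reduced by Lemma \ref{Lem:Ghat2}, so by Lemma \ref{Lm:Folklore1} this composite is injective, and therefore (by our convention) an embedding of $\widehat{G}$ into $\widehat{H}$. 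In other words, the type of $G$ sits inside the type of $H$ as an induced subgraph.

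\medskip

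For the backward direction, assume that the type of $G$ is embedded in the type of $H$, i.e., that there is an embedding $e:\widehat{G}\hookrightarrow\widehat{H}$. Choose $h_H:\widehat{H}\to H$ as guaranteed by Lemma \ref{Lem:Ghat2}, and form
\begin{displaymath}
    G\xrightarrow{r_G}\widehat{G}\xrightarrow{e}\widehat{H}\xrightarrow{h_H}H.
\end{displaymath}
Again each factor is a full homomorphism (embeddings are full homomorphisms by Lemma \ref{Lem:FullHom1}), and the composite is the desired full homomorphism $G\to H$.

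\medskip

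There is no genuine obstacle here; the only small point to keep in mind is that one must pass through an arbitrarily chosen section $h_G$ (resp.\ $h_H$) of $r_G$ (resp.\ $r_H$) in order to have a map going the ``wrong way'' between $G$ and $\widehat{G}$, and the existence of such a section is precisely what Lemma \ref{Lem:Ghat2} supplies.
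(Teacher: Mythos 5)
Your proof is correct and follows essentially the same route as the paper: both directions pass between $G$ and $\widehat{G}$ (and $H$ and $\widehat{H}$) via the maps $r$ and $h$ of Lemma \ref{Lem:Ghat2}, and then invoke Lemma \ref{Lm:Folklore1} to upgrade the resulting full homomorphism out of the reduced graph $\widehat{G}$ to an embedding. The paper merely states this more tersely; your write-up makes the two composites explicit.
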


\begin{proof}
In light of the $r_{X}:X \to \widehat{X}$ and $h_{X}: \widehat{X} \to X$ from
Lemma \ref{Lem:Ghat2}, there is an $f:G\to H$ iff there is a
$\widehat{f}:\widehat{G}\to\widehat{H}$. By Lemma \ref{Lem:FullHom2} the latter
is an embedding.
\end{proof}

\begin{corollary}
\label{Cor:GallC5}A connected graph is an exact Gallai monochrome iff it can
be mapped into $C_{5}$ by a full homomorphism.
\end{corollary}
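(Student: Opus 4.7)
The plan is to combine three ingredients assembled in the preceding pages: the type-translation provided by Lemma \ref{Lem:Type}, the observation immediately before the corollary that identifies the reduced induced subgraphs of $C_{5}$, and the type classification of exact Gallai monochromes given by Proposition \ref{Pr:ExGallMon}.

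First I would note that $C_{5}$ is already reduced, so its type is $C_{5}$ itself. Thus Lemma \ref{Lem:Type} specializes in this setting to the statement that a full homomorphism $G\to C_{5}$ exists iff $\widehat{G}$ embeds into $C_{5}$ as an induced subgraph. Since the canonical map $r_{G}:G\to\widehat{G}$ is a full surjection (Lemma \ref{Lem:Ghat2}), the reduced form of a connected graph is again connected; so for connected $G$, the existence of a full homomorphism $G\to C_{5}$ is equivalent to $\widehat{G}$ being a \emph{connected reduced} induced subgraph of $C_{5}$.

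Next I would appeal to the observation stated in the paragraph just before the corollary: the connected reduced induced subgraphs of $C_{5}$ are precisely $P_{1}$, $P_{3}$, and $C_{5}$ (a short case check on $C_{5}$'s induced subgraphs, using that $P_{2}$ has its two endpoints indistinguishable in $E$ and so collapses to $P_{1}$). Thus a full homomorphism from a connected $G$ to $C_{5}$ exists iff the type of $G$ lies in $\{P_{1},P_{3},C_{5}\}$.

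Finally I would invoke Proposition \ref{Pr:ExGallMon}, which says exactly that the connected graphs appearing as exact Gallai monochromes are precisely those of type $P_{1}$, $P_{3}$, or $C_{5}$ (one direction provides the types of monochromes, the other realizes each such type as a spanning monochrome). Chaining the two equivalences yields the corollary. There is no real obstacle here; the whole argument is essentially a bookkeeping step assembling Lemma \ref{Lem:Type}, the small case check on $C_{5}$, and Proposition \ref{Pr:ExGallMon}.
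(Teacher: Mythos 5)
Your argument is correct and is exactly the proof the paper intends (the paper leaves it implicit, but the preceding paragraph and Lemma \ref{Lem:Type} set up precisely this chain: full homomorphism into $C_{5}$ iff the type of $G$ is a reduced induced subgraph of $C_{5}$, i.e.\ $P_{1}$, $P_{3}$, or $C_{5}$, which by Proposition \ref{Pr:ExGallMon} characterizes exact Gallai monochromes). No substantive difference from the paper's approach.
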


$C_{5}$ reappears in a pivotal role in Section \ref{Sec:HomDual}.

Most questions about exact Gallai cliques can now be answered by
straightforward calculations. For example, we offer a concise proof of Theorem
1 of \cite{ChungGraham:1983}.

\begin{theorem}
\label{Thm:VerBnd}The largest number of vertices of an exact Gallai clique
colored by $k$ colors is $5^{\frac{k}{2}}$ if $k$ is even and $2\cdot
5^{\frac{k-1}{2}}$ if $k$ is odd.
\end{theorem}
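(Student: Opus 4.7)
By Theorem~\ref{Thm:ExGall}, every exact Gallai clique is a tree $2$-clique $K(T)$ whose factors are simple and whose color sets at higher factors are disjoint from those at lower factors. By Proposition~\ref{Pr:SimFacCliq} the three simple types have sizes $2$, $4$, $5$ and use $1$, $2$, $2$ colors respectively. Let $f(k)$ denote the maximum number of vertices of an exact Gallai clique using at most $k$ colors, with $f(0)=1$, $f(1)=2$, $f(2)=5$; the goal is to show $f(2m)=5^m$ and $f(2m+1)=2\cdot 5^m$.

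The plan is to establish the recurrence $f(k)=\max\bigl(2 f(k-1),\,5 f(k-2)\bigr)$ for $k\ge 2$ and then induct. For the upper bound, consider the root factor of $K(T)$: it has $m\in\{2,4,5\}$ children and uses $c\in\{1,2\}$ colors, and each of the $m$ sibling subtrees uses at most the remaining $k-c$ colors (since ancestor colors are forbidden below). By induction each subtree has at most $f(k-c)$ leaves, so $|K(T)|\le m\cdot f(k-c)$. The option $(m,c)=(4,2)$ is dominated by $(5,2)$, leaving only the two terms in the recurrence.

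For the lower bound I construct the extremal tree explicitly. Place a size-$m$ simple factor at the root using $c$ fresh colors; recursively attach $m$ copies of an optimal $(k-c)$-color tree, letting all $m$ siblings share the \emph{same} palette of $k-c$ colors. The condition ``higher factors use different colors than lower factors'' is preserved because the root's $c$ colors are fresh, and within each sibling subtree the analogous condition holds by the inductive construction; hence Theorem~\ref{Thm:ExGall} certifies the result as exact Gallai. The optimal choice is $(m,c)=(5,2)$ when $k\ge 2$, producing a perfectly regular tree of depth $\lceil k/2\rceil$ capped, when $k$ is odd, by a size-$2$ factor at the root.

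Finally, a straightforward induction on $k$ extracts the closed form: assuming $f(2m-2)=5^{m-1}$ and $f(2m-1)=2\cdot 5^{m-1}$, one computes $5 f(2m-2)=5^m > 4\cdot 5^{m-1}=2 f(2m-1)$, while $5 f(2m-1)=2\cdot 5^m=2 f(2m)$. Hence $f(2m)=5^m$ and $f(2m+1)=2\cdot 5^m$. The only conceptual subtlety, and the likely point of difficulty, is the observation that sibling subtrees may \emph{share} colors; this pooling is essential to reach the optimum, and checking that it preserves the exact Gallai property is exactly a single application of Theorem~\ref{Thm:ExGall}.
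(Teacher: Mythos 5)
Your proof is correct and follows essentially the same route as the paper's, which simply invokes Theorem \ref{Thm:ExGall}, notes that factors of sizes $2$, $4$, $5$ contribute $1$, $2$, $2$ colors respectively, and declares that ``the result follows.'' Your recurrence $f(k)=\max\left(2f(k-1),\,5f(k-2)\right)$, together with the observation that incomparable (sibling) factors may share a palette while ancestor chains must not, is precisely the detail the paper leaves implicit, worked out correctly.
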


\begin{proof}
Theorem \ref{Thm:ExGall} permits us to view an exact Gallai clique as a tree
$2$-clique with factors of size $n=2$, $4$ or $5$, in which higher factors use
different colors than lower factors. When $n=2$, the factor contributes one
color. When $n=4$ or $5$, the factor contributes two colors. The result
follows.
\end{proof}

% HOMOMORPHISMS AND DUALITIES

\section{Homomorphism dualities\label{Sec:HomDual}}

\emph{In this section, all graphs are assumed to be connected.} Let
$\mathcal{M}$ be a class of graph homomorphisms. We write
\begin{displaymath}
    G\rightarrow_{\mathcal{M}}H
\end{displaymath}
to mean that there is a function $f:G\rightarrow H$ of $\mathcal{M}$.
Otherwise we write
\begin{displaymath}
    G\nrightarrow_{\mathcal{M}}H.
\end{displaymath}
Two sets $\mathcal{A}$ and $\mathcal{B}$ of graphs are said to be in a
\emph{homomorphism duality} (\cite{NesetrilPultr:1978}) if for every $G$
\begin{displaymath}
    \forall A\in\mathcal{A}\ \left(  A\nrightarrow_{\mathcal{M}}G\right)
    \Longleftrightarrow\exists B\in\mathcal{B}\ \left(
    G\rightarrow_{\mathcal{M}}B\right)  .
\end{displaymath}
In this section we take $\mathcal{M}$ to be the class of full homomorphisms.

\begin{theorem}
We have the homomorphism duality
\begin{displaymath}
    \left\{  C_{3},P_{4},A\right\}  \nrightarrow_{\mathcal{M}}G\quad
    \text{iff}\quad G\rightarrow_{\mathcal{M}}C_{5},
\end{displaymath}
and the connected graphs $G$ characterized by this condition are precisely the
mono\-chromes in exact Gallai cliques.
\end{theorem}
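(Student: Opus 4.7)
The theorem comprises two assertions: the displayed homomorphism duality, and the identification of the graphs it picks out as the monochromes of exact Gallai cliques. The second assertion is immediate from the first together with Corollary~\ref{Cor:GallC5}, so the task is to prove the equivalence. The $(\Leftarrow)$ direction is a short composition argument. Suppose $X\in\{C_3,P_4,A\}$ satisfies $X\to_{\mathcal{M}}G$; composing with $G\to_{\mathcal{M}}C_5$ yields $X\to_{\mathcal{M}}C_5$. A direct neighborhood inspection shows that $C_3$, $P_4$, and $A$ are each reduced, so by Lemma~\ref{Lm:Folklore1} this composition would embed $X$ as an induced subgraph of $C_5$. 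That is impossible: $C_3$ carries a triangle while $C_5$ does not; the only induced subgraph of $C_5$ on five vertices is $C_5$ itself, not $P_4$; and $A$ has six vertices, one more than $C_5$.

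For the converse, I first replace $G$ by its reduced form $\widehat{G}$. The maps $r_G$ and $h_G$ from Lemma~\ref{Lem:Ghat2} allow both the hypothesis and the conclusion to transfer between $G$ and $\widehat{G}$ (by pre- or post-composing with them), so I may assume $G=\widehat{G}$ is reduced. Since $C_3$, $P_4$, and $A$ are all reduced, Lemma~\ref{Lm:Folklore1} converts the hypothesis into: $G$ contains no induced copy of $C_3$, $P_4$, or $A$. Via Lemma~\ref{Lem:Type}, the conclusion becomes: $G$ embeds as an induced subgraph of $C_5$, i.e., $G$ is isomorphic to one of the reduced connected induced subgraphs of $C_5$, which are $P_0$, $P_1$, $P_3$, and $C_5$ (the only other connected induced subgraph, $P_2$, has twin endpoints).

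My plan for the resulting combinatorial claim is to first show that $\Delta(G)\le 2$. Once that is established, $G$ is a path or a cycle, and the rest follows by enumeration: triangle-freeness kills $C_3$; reducedness kills $P_2$ and $C_4$ (opposite vertices being twins); and absence of an induced $P_4$ eliminates every $P_k$ with $k\ge 4$ and every $C_k$ with $k\ge 6$, since any five consecutive vertices of such a cycle induce a $P_4$. The survivors are exactly $P_0$, $P_1$, $P_3$, and $C_5$, as required.

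The main obstacle is the degree bound. Suppose $v$ has three distinct neighbors $u_1,u_2,u_3$; they are pairwise non-adjacent by triangle-freeness, and by reducedness at most one of them has neighborhood $\{v\}$. So (say) $u_1$ and $u_2$ have further neighbors $w_1$ and $w_2$, which necessarily lie outside $\{v\}\cup N(v)$ by triangle-freeness. The plan is to case-split on whether $w_1=w_2$ and, when they differ, on which of the chords $w_1u_2$, $w_2u_1$, $w_1w_2$ is present---at least one must be, or else $w_1-u_1-v-u_2-w_2$ is itself an induced $P_4$. When $w_1=w_2=w$, the set $\{v,u_1,w,u_2\}$ induces a $4$-cycle; combining $u_3$ (a pendant at $v$) with a further neighbor of $u_1$ or $u_2$---forced to exist by reducedness distinguishing $N(u_1)$ from $N(u_2)$---produces two pendants at adjacent cycle-vertices, hence an induced $A$. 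Each remaining case is handled analogously: it either exhibits an induced $P_4$ directly, or manufactures an induced $A$ from a $4$-cycle together with two reducedness-forced pendants. In every scenario the hypothesis is contradicted, so no vertex of degree $\ge 3$ exists, completing the proof.
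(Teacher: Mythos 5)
Your framing is sound and genuinely different from the paper's: the $(\Leftarrow)$ direction, the passage to the reduced form $\widehat{G}$, and the final enumeration of paths and cycles once $\Delta(G)\le 2$ are all correct (the paper instead splits on whether $G$ contains a $C_5$ and explicitly builds the full homomorphism onto $C_5$ or onto a longest induced path, never invoking reducedness). But the entire combinatorial content of the theorem has been pushed into the degree bound, and that is where your proof stops being a proof. The claimed dichotomy --- ``either an induced $P_4$ directly, or an induced $A$ manufactured from a $4$-cycle with two reducedness-forced pendants'' --- does not cover the configurations that actually arise, and the one case you do describe silently assumes adjacencies away.

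Concretely: (i) In the case $w_1=w_2=w$ you treat $u_3$ as a pendant of the $4$-cycle $vu_1wu_2$, but nothing prevents $u_3w\in E$; then $\{v,w,u_1,u_2,u_3\}$ induces $K_{2,3}$, there is no pendant structure, and in fact $K_{2,3}$ plus a single reducedness-forced distinguishing vertex still contains no induced $C_3$, $P_4$, or $A$ and is still not reduced --- you must iterate the reducedness argument a second time and also rule out adjacency between the two forced vertices before any forbidden subgraph appears. Even when $u_3w\notin E$, the forced vertex $x$ at $u_1$ may be adjacent to $u_3$, in which case the contradiction is an induced $P_4$ ($u_3xu_1wu_2$), not the $A$ you describe. (ii) In the case $w_1\ne w_2$ with $w_1w_2\in E$ and $w_1u_2,w_2u_1\notin E$, the vertex $v$ lies on an induced $C_5$, namely $vu_1w_1w_2u_2$; if $u_3$ is adjacent to exactly one of $w_1,w_2$, the resulting six-vertex graph contains no induced $C_3$, $P_4$, or $A$, and the contradiction only emerges after invoking reducedness yet again to separate $N(u_1)$ from $N(u_3)$ and then chasing several sub-subcases to an induced $P_4$. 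None of this fits the two-outcome scheme you announce, so ``each remaining case is handled analogously'' is not a safe summary: the degree bound is true (it is equivalent to the theorem restricted to reduced graphs), but as written it is asserted rather than proved, and the missing case analysis is of essentially the same length and delicacy as the paper's own argument.
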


\begin{proof}
The condition displayed on the right characterizes the mo\-no\-chrom\-es in exact
Gallai cliques by Lemma \ref{Lem:Type} combined with Proposition
\ref{Pr:ExGallMon}. The same lemma also shows that the condition displayed on
the right implies the one on the left. Thus we have only to show that for any
connected graph $G$,
\begin{displaymath}
    \left\{  C_{3},P_{4},A\right\}  \nrightarrow_{\mathcal{M}}G\ \Rightarrow
    \ G\rightarrow_{\mathcal{M}}C_{5}.
\end{displaymath}

Suppose $G=(V,E)$ contains a copy of $C_{5}$, designated as in \ref{Note:SpGr}.
First observe that for every $v\in V$ there is an index $i$ for which
$vv_{i}\in E$. Indeed, if this were not the case then there would exist
vertices $u$ and $w$ and index $j$ such that $uw,wv_{j}\in E$ but $uv_{j}\notin
E$. (Consider the last three vertices on a shortest path from $v$ to $C_{5}$.)
In order to prevent $\left\{  u,w,v_{j},v_{j+1},v_{j+2}\right\}  $ and $\left\{
u,w,v_{j},v_{j-1},v_{j-2}\right\}  $ from being copies of $P_{4}$, we would
have to have $uv_{j+2}$, $uv_{j-2}\in E$, but then we would have a triangle.
Furthermore, $v$ cannot be connected with only one $v_{j}\in C_{5}$, or else
there would be a $P_{4}$-path $\left\{ w,v_{j},v_{j+1},v_{j+2},v_{j+3}\right\}
$. To avoid triangles, $v$ cannot be connected with two neighboring points
$v_{j}$, $v_{j+1}$ of $C_{5}$. Therefore, for every $v\in V$ there is exactly
one $i$, $0\leq i\leq 4$, such that $vv_{i-1}$, $vv_{i+1}\in E$; set $f(v)=i$.

We need to demonstrate that the map $v\longmapsto v_{f\left( v\right)  }$ is a
full homomorphism. If $uv\in E$ then we must have $f(u)=f(v)\pm1$, since
otherwise
\begin{displaymath}
    \left\{  v_{f(u)-1},v_{f(u)+1}\right\}  \cap\left\{  v_{f(v)-1},v_{f(v)+1}
    \right\}  \neq\emptyset,
\end{displaymath}
resulting in a triangle. Finally, if $f(u)=i$ and $f(v)=i+1$ then $uv\in E$
lest $\left\{  u,v_{i-1},v_{i-2},v_{i+2},v\right\}  $ be a copy of $P_{4}$.

Suppose $(V,E)$ does not contain a copy of $C_{5}$. Then the longest induced
path is a copy of $P_{k}$, $k=1$, $2$, or $3$, since
$P_{4}\nrightarrow_{\mathcal{M}}G$. Choose such a path in $G$, call it $P_{k}$,
and designate its vertices as in \ref{Note:SpGr}. Since
$P_{k}\rightarrow_{\mathcal{M}}C_{5}$, it suffices to construct a full
homomorphism $f:G\rightarrow P_{k}$.

If $k=1$ then $G$, by connectedness, is $P_{1}$ itself and the statement is
obvious. So suppose $k=2$, so that $P_{k}$ is $\left\{
v_{0},v_{1},v_{2}\right\}  $. Then for every $v\in V$ we have either $vv_{1}$
or $vv_{0}$ in $E$, and in the latter case we also have $vv_{2}$ in $E$, since
otherwise there would be a $P_{3}$-path. Set
\begin{displaymath}
    f(v)=\left\{
    \begin{array}
    [c]{lll}
    v_{1} & \text{if} & vv_{0},vv_{2}\in E\\
    v_{0} & \text{if} & vv_{1}\in E
    \end{array}
    \right.  .
\end{displaymath}
(Note that the range of $f$ is actually a $P_{1}$-path. This is not surprising,
for the reduced form of a $P_{2}$-path is a $P_{1}$-path, so that by Lemma
\ref{Lem:Type}, $G$ admits a full homomorphism into a $P_{2}$-path iff it
admits a full homomorphism into a $P_{1}$-path.) Now if $uv\in E$ then we could
not have $f(u)=f(v)=v_{i}$, for there would be the triangle $uvv_{1-i}$. And if
$f(u)f(v)$ is an edge, say $f(u)=v_{1}$ and $f(v)=v_{0}$, then, in order to
prevent $\left\{  u,v_{0},v_{1},v\right\}  $ from being a $P_{3}$-path, we have
to have $uv\in E$.

It remains only to handle the case in which $k=3$. We claim that each $v\in V$
has to be immediately connected with some $v_{i}\in P_{3}$. For otherwise
consider the last three points, call them $u$, $w$, and $v_{i}$, on a shortest
path connecting $v$ to $P_{3}$. Note that, since $u$ is not connected to
$P_{3}$, avoiding a $P_{4}$-path requires a second edge (other than $wv_{i}$)
joining $w$ to $P_{3}$, and there is precisely one such edge, else a triangle
arises.

If $i$ is $0$ then the possibilities for the second edge are $wv_{1}$,
$wv_{2}$, and $wv_{3}$, but these choices lead to a copy of $C_{3}$, a copy of
$A$, or a copy of $P_{4}$, respectively. If $i$ is $1$ then the possibilities
are $wv_{0}$, $wv_{2}$, and $wv_{3}$, but these choices lead to a copy of
$C_{3}$, a copy of $C_{3}$, or a copy of $A$, respectively. Symmetrical
arguments rule out the possibility that $i$ could be $2$ or $3$, and the claim
is proven.

Set
\begin{displaymath}
    f\left(  v\right)  \equiv\left\{
    \begin{array}
    [c]{lll}
    v_{0} & \text{if} & vv_{1}\in E\text{ and }vv_{3}\notin E\\
    v_{1} & \text{if} & vv_{0}\in E\text{ and }vv_{2}\in E\\
    v_{2} & \text{if} & vv_{1}\in E\text{ and }vv_{3}\in E\\
    v_{3} & \text{if} & vv_{2}\in E\text{ and }vv_{0}\notin E
    \end{array}
\right.  .
\end{displaymath}
The definition is correct, for if $vv_{0}\in E$ then $vv_{2}\in E$ to prevent
$\left\{  v,v_{0},v_{1},v_{2},v_{3}\right\}  $ from being either $P_{4}$ or
$C_{5}$, and similarly, if $vv_{3}\in E$ then also $vv_{1}\in E$. And the
value of the function at any argument is unique, since otherwise we would have
a copy of $C_{3}$. For the same reason, if $f(u)=f(v)$ then $uv\notin E$.

Now we must show that if $f\left(  u\right)  $ and $f\left(  v\right)  $ are
connected by an edge then so are $u$ and $v$. If $f(u)=v_{0}$ and $f(v)=v_{1}$
then $uv\in E$, since otherwise we would have an $A$-subgraph $\left\{
u,v_{1},v_{0},v_{2},v,v_{3}\right\}  $; likewise $f(u)=v_{2}$ and $f(v)=v_{3}$
imply $uv\in E$. If $f(u)=v_{1}$ and $f(v)=v_{2}$ then $uv\in E$, since
otherwise we would have a $P_{4}$-path $\left\{  u,v_{0},v_{1},v,v_{3} \right\}
$.

At last, we must show that if $f\left(  u\right)  $ and $f\left(  v\right)  $
are not connected by an edge then neither are $u$ and $v$.  If $f(u)=v_{0}$
and $f(v)=v_{2}$ then $uv\notin E$ because of the triangle $uvv_{1}$, and
similarly $uv\notin E$ if $f(u)=v_{1}$ and $f(v)=v_{3}$. Finally, if
$f(u)=v_{0}$ and $f(v)=v_{3}$ then $uv\notin E$ since otherwise we would have
an $A$-subgraph $\left\{  v_{0},v_{1},u,v,v_{2},v_{3}\right\}  $.
\end{proof}

\section{Acknowledgement}

We thank the anonymous referee for several useful comments and for the nice
proof of Proposition \ref{Pr:GallMon}.

% REFERENCES

\end{document}